\documentclass[11pt]{amsart}
\usepackage[margin=25mm]{geometry}

\usepackage{mystyle}
\usepackage[foot]{amsaddr}

\title{Strongly quasipositive links are concordant to infinitely many strongly quasipositive links}
\author{Paula Truöl}
\address{Max Planck Institute for Mathematics, Vivatsgasse 7, 53111 Bonn, Germany.}
\email{paulagtruoel@gmail.com}

\begin{document}

\def\subjclassname{\textup{2020} Mathematics Subject Classification}
\expandafter\let\csname subjclassname@1991\endcsname=\subjclassname
\subjclass{
57K10, 
20F36. 
}
\keywords{Concordance, strongly quasipositive links, satellite operation} 

\begin{abstract}
We show that every non-trivial strongly quasipositive link is smoothly concordant to infinitely many pairwise non-isotopic strongly quasipositive links. In contrast to our result, Baker conjectured that smoothly concordant strongly quasipositive fibered knots are isotopic. Our construction uses a satellite operation whose companion is a slice knot with maximal Thurston--Bennequin number -1.
\end{abstract}

\maketitle

\section{Introduction}

We study \emph{links} in the $3$-sphere $S^3$, i.e.~non-empty, oriented, closed, smooth $1$-dimensional submanifolds of $S^3$, up to (ambient) isotopy. By a fundamental theorem of Alexander \cite{alexander}, every link in $S^3$ can be represented as the closure of an $n$-braid for some $n \geq 1$. An $n$-\emph{braid} is an element of the \emph{braid group $B_n$ on $n$ strands}, whose classical presentation with $n-1$ generators $\sigma_1, \dots, \sigma_{n-1}$ and relations
\begin{align*}
\sigma_i \sigma_j = \sigma_j \sigma_i \quad \text{if } | i-j| \geq 2 \quad \text{and} \quad  \sigma_i \sigma_{i+1} \sigma_i = \sigma_{i+1} \sigma_{i} \sigma_{i+1} 
\end{align*}
was introduced by Artin \cite{artin_1925}. For a detailed discussion of braids and their closures, we refer the reader to \cite{birmanbrendle}. An $n$-braid is called \emph{quasipositive} if it is a finite product of conjugates of positive Artin generators $\sigma_i$ of $B_n$, and it is called \emph{strongly quasipositive} if the product consists only of \emph{positive band words} (or \emph{BKL generators} honoring the work of Birman--Ko--Lee \cite{birmankolee}) 
\begin{align*}
\sigij = \left(\sigma_i \cdots \sigma_{j-2} \right) \sigma_{j-1} \left(\sigma_i \cdots \sigma_{j-2} \right)^{-1} \qquad \text{for } 1 \leq i < j \leq n.
\end{align*}
Note that $\sigma_{i,i+1} = \sigma_i$.
A link is called \emph{(strongly) quasipositive} if it arises as the closure of a (strongly) quasipositive $n$-braid for some $n \geq 1$, respectively.

Two (ordered) links $L_0 = L_0^1 \cup \dots \cup L_0^r$ and $L_1 = L_1^1 \cup \dots \cup L_1^r$ of $r$ components are called \emph{(smoothly) concordant} if there exists a smoothly and properly embedded, oriented submanifold $A = A_1 \cup \dots \cup A_r$ of $S^3 \times [0,1]$ such that $A$ is diffeomorphic to a disjoint union of $r$ annuli $S^1 \times [0,1] $, $\partial A_i = L_0^i \times \{0\} \,\cup L_1^i \,\times \{1\}$, $i \in \{1, \dots, r\},$ and the induced orientation on $\partial A$ agrees with the orientation of $L_0$, but is the opposite one on $L_1$. 

Our main result is the following.

\begin{theorem}\label{thm:main}
Every non-trivial strongly quasipositive link is smoothly concordant to infinitely many pairwise non-isotopic strongly quasipositive links.
\end{theorem}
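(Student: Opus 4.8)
The plan is to realize each target link as a \emph{satellite} of $L$ whose companion is a fixed nontrivial slice knot $K$ with $\overline{\mathrm{tb}}(K) = -1$, whose existence I would verify separately. Write $L = \widehat{\beta}$ for a strongly quasipositive braid $\beta = \prod_k \sigma_{i_k, j_k} \in B_n$, and let $\Sigma$ be the associated Bennequin surface: $n$ disks joined by one positively half-twisted band per factor $\sigma_{i_k,j_k}$. Since $L$ is not an unlink, $\beta$ cannot be the trivial braid, so $\Sigma$ has at least one band $b$; this is the only place the hypothesis on $L$ enters. For each $m \geq 1$ I would tie a copy of $K$ into $b$ at $m$ distinct locations, i.e.\ replace $m$ disjoint trivial sub-arcs of the core of $b$ by arcs following $K$ and thicken back to a band. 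Call the resulting surface $\Sigma_m$ and set $L_m := \partial \Sigma_m$. Since the two boundary arcs of a band are anti-parallel in $\partial\Sigma$, this is a satellite operation of winding number $0$ with companion $K$; in particular the Alexander polynomial of $L_m$ equals that of $L$, which already signals that the $L_m$ will have to be separated by an invariant finer than genus.

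\textbf{Strong quasipositivity (the crux).} The heart of the proof is the lemma that \emph{tying a knot $K$ with $\overline{\mathrm{tb}}(K) = -1$ into a band of a quasipositive surface again yields a quasipositive surface}; granting it, each $\Sigma_m$ is a Bennequin surface and hence each $L_m$ is strongly quasipositive. To prove the lemma I would represent $K$ by a Legendrian knot $\Lambda$ with $\mathrm{tb}(\Lambda) = -1$ and cut it open into a Legendrian tangle $\tau$. A positive band carries a preferred band framing, and the point is that the framing defect of $\tau$ relative to a trivial arc is governed exactly by $\mathrm{tb}(\Lambda)$: because $\mathrm{tb}(\Lambda) = -1$, the very value attained by the maximal Legendrian unknot modelling a trivial band, inserting $\tau$ produces a piece that still factors as a product of \emph{positive} bands, with no negatively twisted band forced upon us. Had $\overline{\mathrm{tb}}(K)$ been strictly less than $-1$, the insertion would necessarily introduce negative twisting and destroy positivity. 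Making this band-replacement and the resulting positive factorization precise, presumably via an explicit front-projection computation in the spirit of Rudolph's work on quasipositive surfaces, is the main obstacle I anticipate. The lemma is local to a sub-band, so it applies verbatim to each of the $m$ insertions.

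\textbf{Concordance and infinitely many isotopy types.} Concordance is the soft part: a slice knot is smoothly concordant to the unknot, and satellite operations carry concordant companions to concordant links, so $L_m$, obtained from $L$ by a satellite operation with companion $K$, is concordant to the link obtained by the same operation with companion the unknot, namely $L$ itself. It remains to see that the $L_m$ are pairwise non-isotopic. Here $\overline{\mathrm{tb}}(K) = -1$ together with sliceness forces, through the slice--Bennequin inequality, the surfaces $\Sigma_m$ to be genus-minimizing, so that $L_m$ has the same genus as $L$; thus neither genus nor the Alexander polynomial distinguishes the $L_m$, and one must appeal to the satellite structure itself. Each insertion is geometrically essential (wrapping number $2$), so it produces an essential torus in $S^3 \setminus L_m$ with companion $K$, and these $m$ tori are disjoint and non-parallel. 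Since $K$ is nontrivial, the number of JSJ pieces of the complement homeomorphic to $S^3 \setminus N(K)$ is an isotopy invariant equal to $m$, so the $L_m$ are pairwise non-isotopic. This produces infinitely many strongly quasipositive links concordant to $L$, as required. Alternatively, one could keep a single insertion and instead vary $K$ over an infinite family of pairwise distinct slice knots with $\overline{\mathrm{tb}} = -1$.
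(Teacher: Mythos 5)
Your overall strategy is the same as the paper's: tie a nontrivial slice knot $K$ with maximal Thurston--Bennequin number $-1$ into a band of the Bennequin surface of $L$, get quasipositivity of the new surface from Rudolph's characterization of quasipositive annuli $A(K,k)$ (equivalently, your Legendrian-tangle insertion), get concordance from the sliceness of the companion, and distinguish the results via the satellite structure. The paper iterates a single insertion (producing companions $K^{\# m}$, which is equivalent to your $m$ insertions along one band) and cites Kouno--Motegi to separate the isotopy types.

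There is, however, a genuine gap at the step where you assert that ``each insertion is geometrically essential (wrapping number $2$).'' This is false for an arbitrary band $b$, and the whole argument collapses without it: if the meridian $\gamma$ of $b$ is null-homotopic in $S^3\setminus L$, then $L$ lies in a ball inside the solid torus $S^3\setminus\nu(\gamma)$, the satellite operation is trivial, and $L_m$ is isotopic to $L$ for every $m$ (your essential tori are then compressible, so the JSJ count also fails). Concretely, for $\beta=\sigma_1^2\sigma_2\in B_3$ the band corresponding to $\sigma_2$ is a dead end --- the associated edge of the spine of $F(\beta)$ is a cut edge --- and tying any knot into it returns the Hopf link. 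So one must \emph{prove} that a band with essential meridian exists whenever $L$ is not an unlink, and this is exactly the content of the paper's \Cref{lem:cylinder}: one shows that either some band joins two distinct link components, or some component of the surface is not a disk, and in either case the spine graph contains a cycle through a suitable edge, whence the meridian of the corresponding band links a curve on the surface nontrivially; incompressibility of quasipositive surfaces (\Cref{lem:step2}, an innermost-disk argument) then upgrades ``essential in $S^3\setminus F$'' to ``essential in $S^3\setminus L$,'' which combined with winding number $0$ gives wrapping number $\geq 2$. Your proposal needs this lemma, or an equivalent, to be a proof. Two smaller points: your JSJ-counting argument requires extra care when $K$ is itself composite or a satellite and when $L$ has several components (the paper sidesteps this by invoking Kouno--Motegi for the one-component case and by observing that in the two-component case the operation is a connected sum with $K$ on two components); and the framing with which $K$ is tied in must be pinned down, since quasipositivity of the inserted piece is exactly the statement that $A(K,k)$ is quasipositive for the relevant $k\leq \mathrm{TB}(K)$, which your ``thicken back to a band'' leaves implicit.
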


To discuss the context of this result, we will focus on knots in the rest of the introduction. \emph{Knots} are links with one connected component.

Quasipositive knots occur in complex geometry as transverse intersections of smooth algebraic curves in $\C^2$ with the $3$-sphere $S^3\subset \C^2$, which provides a geometric characterization of these knots~\cite{Rudolph_1983_alg_fcts,boileau_orevkov_2001}. 
Their study has recently attracted much attention because of its relation to contact geometry and Heegaard--Floer homology. For example, among fibered knots, strongly quasipositive knots are precisely those knots for which the contact structure on $S^3$ induced by the associated open book decomposition is the unique tight contact structure on $S^3$ \cite{hedden05}.
In the context of smooth concordance, (strongly) quasipositive knots are special. For example, it follows from Rudolph's slice--Bennequin inequality \cite{rudolphslice} that not every knot is concordant to a quasipositive knot, which is contrary to 
the behavior in the topological category \cite{borodzikFeller}. 

Consider the following inclusions:
\begin{align*}
\begin{split}
\{\text{algebraic knots}\} 
&\subset \{\text{positive knots}\}
\subset \{\text{strongly quasipositive knots}\}.
\end{split}
\end{align*}
\emph{Positive} knots are the knots that admit a diagram with only positive crossings. \emph{Algebraic} knots arise as links of isolated singularities of smooth algebraic plane curves in $\C^2$~\cite{milnor_book}. They are closures of positive braids (see \eg \cite[Theorem 12 in Section 8.3]{brieskornknoerrer}) and thus positive; the second inclusion is due to Rudolph and Nakamura~\cite{Rudolph_positiveLinksSQP,nakamura}. In contrast to \Cref{thm:main} and generalizing earlier results by Stoimenow \cite{Stoimenowpossign,StoimenowConc}, Baader--Dehornoy--Liechti \cite{Baader_2017} showed that every knot is (topologically and thus also smoothly) concordant to at most finitely many positive knots. Furthermore, Litherland~\cite{litherland} proved that algebraic knots are isotopic if they are concordant.

Indeed, according to a result of Baker \cite{Baker_2015}, either smoothly concordant, strongly quasipositive, fibered knots are isotopic, or the slice--ribbon conjecture is false. The slice--ribbon conjecture goes back to a question posed by Fox \cite{fox_1962} and asserts that every \emph{slice} knot is \emph{ribbon}, i.e~every knot that is concordant to the unknot bounds an immersed disk in $S^3$ with only ribbon singularities. A~pair of smoothly concordant, but non-isotopic, strongly quasipositive, fibered knots would provide a counterexample to this long-standing conjecture. The links constructed in the proof of \Cref{thm:main} are not fibered; see \Cref{rem:fibered}. Nevertheless, as far as we know, the following question---which is a weaker version of Baker's conjecture---is open.

\begin{question}\label{question}
Are there only finitely many strongly quasipositive, fibered knots in each smooth concordance class?
\end{question}

\begin{rem}\label{rem:BakerHedden}
In \cite{Baker_2015}, Baker explains a strategy personally communicated to him by Hedden which directly shows that, contrary to the conjectured result for strongly quasipositive, fibered knots, there are infinitely many pairs of (ribbon) concordant strongly quasipositive knots that are not isotopic. Indeed, the positive $k$-twisted Whitehead doubles of two (ribbon) concordant, non-isotopic knots provide examples of such pairs for negative,
sufficiently small $k$. This project began with the observation that, using the above idea but being careful about the choice of~$k$, one can construct an infinite family of concordant, pairwise non-isotopic strongly quasipositive knots by taking the positive $(-1)$-twisted Whitehead doubles of infinitely many concordant, pairwise non-isotopic knots, all of which have maximal Thurston--Bennequin number equal to $-1$. We explain this in more detail in \Cref{rem:oldconstrlonger}. Note that the statement of \Cref{thm:main} is stronger, since it shows that \emph{every} non-trivial strongly quasipositive knot is concordant to infinitely many strongly quasipositive knots.
\end{rem}

\begin{rem}
The non-triviality assumption in \Cref{thm:main} is necessary because there exists only one strongly quasipositive slice knot: the unknot. This follows from the fact that for strongly quasipositive knots the genus and the smooth $4$-genus coincide \cite{bennequin,rudolphslice}.
\end{rem}

\textbf{Organization.} In order to prove \Cref{thm:main}, we will first establish some notations and definitions regarding quasipositive Seifert surfaces and study examples of such surfaces in \Cref{sec:qpannuli}. In~\Cref{sec:tyingknots}, from two quasipositive Seifert surfaces $F_1$ and $F_2$ for links $\partial F_1$ and $\partial F_2$, we will construct a third surface whose boundary is a link that is concordant, but not isotopic to $\partial F_2$. The surface $F_1$ will be one of the quasipositive annuli from \Cref{sec:qpannuli}. We will finally prove \Cref{thm:main} in \Cref{sec:proofthm}.\\

\textbf{Acknowledgements.}
I would like to thank Peter Feller for his constant support and many fruitful discussions, the anonymous referees for their valuable comments, which also led to \Cref{rem:simplvolume}, and Michel Boileau for pointing out \Cref{question}. I would also like to thank Léo Benard, Pietro Capovilla, Lukas Lewark and Eric Stenhede for helpful discussions, Cara Hobohm for carefully reading an earlier version, and Arunima Ray for her question during the \emph{MATRIX-MFO Tandem Workshop 2136a: Invariants and Structures in Low-Dimensional Topology} at MFO Oberwolfach, which initiated this project. 
I would like to express my gratitude to the organizers of this workshop, as well as the organizers of \emph{Braids in Low-Dimensional Topology} at ICERM and \emph{Surfaces in 4-manifolds} at Le Croisic, for creating such a productive and inspiring environment. 


\section{Quasipositive Seifert surfaces and annuli}\label{sec:qpannuli}

We first define quasipositive Seifert surfaces. A \emph{Seifert surface} (for a link~$L$) is an oriented, compact surface in $S^3$ (with oriented boundary $L$) without closed components. Let $L$ be a link that arises as the closure $\widehat{\beta}$ of a strongly quasipositive braid $\beta \in B_n$, $n \geq 1$, which is a product of~$\ell \geq 0$ positive band words $\sigij$. We refer to such a product as a \emph{strongly quasipositive braid word}, and, despite a minor abuse of notation, also denote it by~$\beta$. 
There is a canonical Seifert surface of Euler characteristic $n-\ell$ for $L$ associated to the braid word $\beta$, which consists of $n$ copies of disjoint parallel disks and $\ell$~half-twisted bands connecting these disks~\cite{rudolph_braidedsurfaces,Rudolph_surfaces}.

See~\Cref{fig:annulus_m9_46_qp} for an example, ignoring the caption for now. The figure shows the canonical Seifert surface associated to the strongly quasipositive braid word 
$\alpha = \sigma_{1,6} \sigma_{3,8} \sigma_{2,5} \sigma_{1,4} \sigma_{3,7} \sigma_{2,6} \sigma_{5,8} \sigma_{4,7} \in B_8$ which represents a braid whose closure is a link of two components. The surface consists of eight disks and eight bands, so it has Euler characteristic zero. 

\begin{figure}[htbp]
  \centering
  \includegraphics[width=0.49\textwidth]{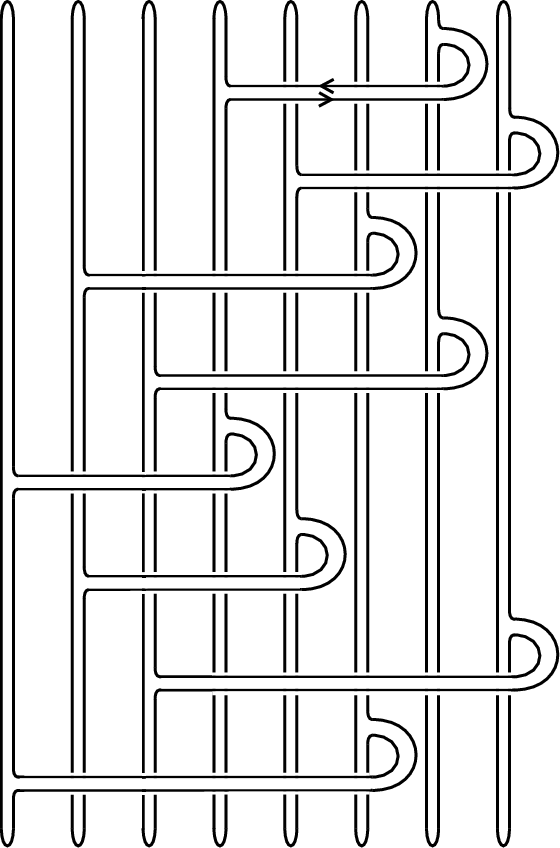}
\caption{The annulus $A\left(C, -1\right)$ for~$C = \mnine$ is ambient isotopic to $F(\alpha)$ for~$\alpha = \sigma_{1,6} \sigma_{3,8} \sigma_{2,5} \sigma_{1,4} \sigma_{3,7} \sigma_{2,6} \sigma_{5,8} \sigma_{4,7} \in B_8$.}  
\label{fig:annulus_m9_46_qp}
\end{figure}

We will denote the canonical Seifert surface associated to a braid word~$\beta$ by~$F(\beta)$. We call any Seifert surface~$F$ for a link~$L= \partial F$ \emph{quasipositive} if, for some strongly quasipositive braid word~$\beta \in B_n$, $n \geq 1$, $F$ is ambient isotopic to $F(\beta)$. We will be particularly interested in certain quasipositive annuli, such as the one shown in \Cref{fig:annulus_m9_46_qp}. For our construction in \Cref{sec:tyingknots}, in particular, we will need a quasipositive annulus for which the core curve 
is a slice knot.


Now, let $C$ be a nontrivial (\ie not the unknot) slice knot with maximal Thurston--Bennequin number~$\TB(C)=-1$, \eg the mirror of the knot $9_{46}$ from Rolfsen's knot table \cite{ng_Thurston-Bennequin,knotinfo}, which we denote by $\mnine$. For our purposes, we could use any such knot $C$, but for the sake of concreteness of our illustrations we will fix $C = \mnine$ in the entire text.

Recall that every knot $K$ has a Legendrian representative (which is at each point in $S^3$ tangent to the $2$-planes of the standard contact structure on $S^3$) and its maximal Thurston--Bennequin number~$\TB\left(K\right)$ is defined as 
\begin{align*}
\TB\left(K\right) = \max \{ \tb(\mathcal{L}) \mid \mathcal{L} \text{ is a Legendrian representative of } K\}.
\end{align*}
Here, for a Legendrian knot $\mathcal{L}$, $\tb(\mathcal{L})$ denotes its Thurston--Bennequin number; see \eg \cite{etnyre} for a definition.

\Cref{fig:LegRep} shows the front projection of a Legendrian representative $\mathcal{L}$ of~$\mnine$ with $\tb(\mathcal{L}) = -1$. There is a Lagrangian concordance between the Legendrian unknot $\mathcal{U}$ with $\tb(\mathcal{U}) = -1$ and $\mathcal{L}$; see~\cite[Figure 4]{Chantraine15}. In particular, the knot $\mnine$ is slice, and since $\TB(K) \leq -1$ for every slice knot $K$ \cite{rudolph_obstrtoslice}, this implies $\TB\left(\mnine\right)=-1$.

\begin{figure}[htbp]
  \centering
  \includegraphics[width=1\textwidth]{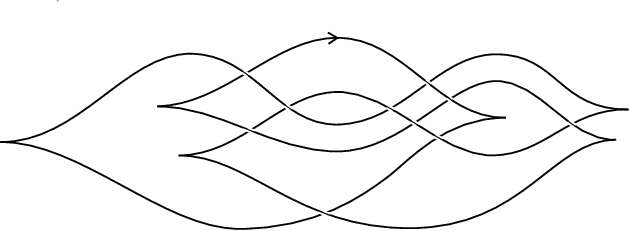}
\caption{Front projection of a Legendrian representative $\mathcal{L}$ of $\mnine$ with Thurston--Bennequin number $\tb(\mathcal{L}) = -1$; cf. \cite[Figure~1]{Chantraine15}.}  
\label{fig:LegRep}
\end{figure}

For a knot $K$ and an integer $k$, following Rudolph's notation \cite{rudolph_quasipositive_annuli}, let $A\left(K, k\right)$ denote an annulus of type $K$ with $k$ full twists, \ie $A\left(K, k\right) \subset S^3$ is an annulus with $K \subset \partial A\left(K,k\right)$ such that~$\lk\left(K, \partial A\left(K,k\right) \setminus K\right) = -k$, where $\lk$ denotes the linking number. We have
\begin{align*}
\sup \left\{ k \mid A\left(K,k\right) \text{ is quasipositive} \right\} = \TB\left(K\right) \qquad \text{\cite[Proposition 1]{rudolph_obstrtoslice}}.
\end{align*}
Hence, for every knot $K$ with $\TB\left(K\right)=-1$, the annulus $A\left(K, -1\right)$ is quasipositive, in particular for $K = C = \mnine$.
This is the key observation of this subsection. It implies the existence of a strongly quasipositive braid word $\alpha \in B_m$ for some $m \geq 1$ such that $A\left(C, -1\right)$, the annulus of type~$C$, or equivalently, the annulus with core curve $C$, is ambient isotopic to~$F(\alpha)$. 
For example, we can choose 
\begin{align}\label{eq:alphabraidword}
\alpha = \sigma_{1,6} \sigma_{3,8} \sigma_{2,5} \sigma_{1,4} \sigma_{3,7} \sigma_{2,6} \sigma_{5,8} \sigma_{4,7}  \in B_8;
\end{align}
see \Cref{fig:annulus_m9_46_qp}. 
Note that for any knot $K$ with $\TB\left(K\right)=k$, a quasipositive braid diagram for $A\left(K, k\right)$ can be found by taking the Legendrian ribbon of a Legendrian representative $\mathcal{L}$ of $K$ with $\tb(\mathcal{L}) = k$ \cite{Rudolph_II,rudolph_quasipositive_annuli}. 

\section{Tying knots into positive bands of quasipositive Seifert surfaces preserving quasipositivity}\label{sec:tyingknots}

Let $L$ be a link other than an unlink and let $F$ be a quasipositive Seifert surface for $L$. As in \Cref{sec:qpannuli}, let $C= \mnine$ such that the annulus $A\left(C, -1\right)$ is quasipositive. As mentioned above, for $C$ we could also use every other nontrivial slice knot with $\TB(C)=-1$.

In this section, starting from the quasipositive Seifert surfaces $A\left(C, -1\right)$ and $F$, we will define a new quasipositive Seifert surface $F^\prime$, which has as boundary a link that is concordant, but not isotopic to $L = \partial F$. To do this, for both $A\left(C, -1\right)$ and $F$ choose strongly quasipositive braid words~$\alpha \in B_m$ and $\beta \in B_n$ for $m, n \geq 1$, respectively, such that $A\left(C, -1\right)$ is ambient isotopic to $F(\alpha)$ and $F$ is ambient isotopic to $F(\beta)$. For example, we can and will choose $\alpha$ as in \eqref{eq:alphabraidword} from \Cref{sec:qpannuli}. Let $\beta = \prod_{k=1}^\ell \sigma_{i_k, j_k} \in B_n$ for some $1 \leq i_k < j_k \leq n$, $n \geq 1$. We can put the surfaces $F(\alpha)$ and~$F(\beta)$ in split position in $S^3$ (as sketched in \Cref{fig:a}). Concretely, we can take $F(\alpha)$ to lie in the lower hemisphere and $F(\beta)$ to lie in the upper hemisphere of $S^3$, respectively. Then we can choose a cylinder $Z \subset S^3$ such that the bands $B_\alpha$ of $F(\alpha)$ and $B_\beta$ of $F(\beta)$ corresponding to the positive band words $\sigma_{4,7}$ and $\sigma_{i_1, j_1}$, respectively, intersect $Z$ as indicated in the upper part of \Cref{fig:cylinderwithbands} (ignoring $\varphi(\gamma)$ for now). More precisely, we can choose a cylinder $Z \subset S^3$ and an orientation-preserving diffeomorphism $\varphi \colon Z \to D^2 \times [0,1]$ such that $Z \cap F(\alpha) = Z\cap B_\alpha$, $Z \cap F(\beta) =Z\cap  B_\beta$ and $\varphi$ maps
\begin{align}\label{eq:cylindertriple}
\begin{split}
& Z\cap B_\alpha \xrightarrow[\varphi]{\cong} 
\left[-\frac{2}{3}, -\frac{1}{3}\right] \times [0,1], \qquad Z \cap \partial B_\alpha \xrightarrow[\varphi]{\cong} \left\{-\frac{2}{3}, -\frac{1}{3}
 \right\} \times [0,1],\\
&Z\cap  B_\beta \xrightarrow[\varphi]{\cong} \left[\frac{1}{3}, \frac{2}{3}\right] \times [0,1], \qquad  Z \cap \partial B_\beta \xrightarrow[\varphi]{\cong}
\left\{\frac{1}{3}, \frac{2}{3}
 \right\} \times [0,1],
 \end{split}
\end{align}
where $X \stackrel{\cong}{\rightarrow} Y$ indicates an orientation-preserving diffeomorphism. Here, $\left[a,b\right] \subset D^2$ denotes the straight line segment connecting $a$ and $b$ in the closed unit disk $D^2\subset\C$.
We choose the orientations on $D^2 \times [0,1]$ and $\left[a,b\right] \times [0,1]$ induced by the standard orientations on $\C \times \R \cong \R^3 \subset S^3$ and $\R^2$, respectively; the orientation on $\left[a,b\right] \times [0,1]$ also induces an orientation on $\left\{a,b\right\} \times [0,1]$.
We claim that we can choose $Z$ and $\varphi$ as above such that there exists a simple closed curve $\gamma$ in $S^3 \setminus F(\beta)$ that wraps once around the band $B_\beta$ of $F(\beta)$ corresponding to $\sigma_{i_1, j_1}$ and that does not bound a disk in $S^3 \setminus \partial F(\beta)$. More precisely, we claim the following.

\begin{figure}[htbp]
  \centering
  \begin{psfrags}
  \psfrag{a}{\textcolor{myred}{$\varphi(\gamma)$}}
  \includegraphics[width=0.35\textwidth]{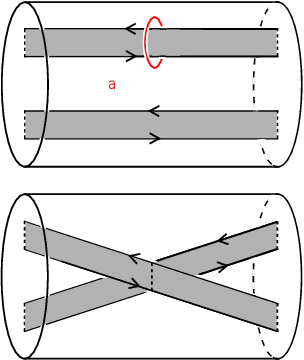}
\end{psfrags}
\caption{Top: 
The triple $\big(D^2 \times [0,1], \left[-\frac{2}{3}, -\frac{1}{3}\right] \times [0,1] \cup \left[\frac{1}{3}, \frac{2}{3}\right] \times [0,1]$, $\left\{\pm\frac{1}{3}, \pm\frac{2}{3}\right\} \times [0,1]\big)$, which is mapped to $\left(Z, Z\cap \left(B_\alpha \cup B_\beta\right), Z \cap \partial \left(B_\alpha \cup B_\beta\right)\right)$ via $\varphi$. The red curve depicts $\varphi(\gamma) \subset D^2 \times [0,1]$; see (ii) in \Cref{lem:cylinder}. Bottom: $B^\prime \subset D^2 \times [0,1] $ for $B^\prime$ as defined in \eqref{eq:defnD}.
}  
\label{fig:cylinderwithbands}
\end{figure}

\begin{lem}\label{lem:cylinder}
Let $F(\beta)$ be the quasipositive Seifert surface associated to a strongly quasipositive braid word $\beta$ such that $\partial F(\beta)$ is not an unlink. We can choose a cylinder $Z \subset S^3$ and an orientation-preserving diffeomorphism $\varphi \colon Z \to D^2 \times [0,1]$ such that 
\begin{enumerate}[label=(\roman*)]
\item $Z \cap F(\alpha) = Z\cap B_\alpha$ and $Z \cap F(\beta) =Z\cap  B_\beta$ verify \eqref{eq:cylindertriple}, where $B_\alpha$ and $B_\beta$ are the bands corresponding to the last positive band word of $F(\alpha)=A(C,-1)$ and the first positive band word of $F(\beta)$, respectively,
\item \label{lemii}
$\gamma \eqdef \varphi^{-1}(  C_{\frac{1}{3}}(\frac{1}{2})\times \{\frac{1}{2}\})$ is an unknot in $S^3 \setminus F(\beta)$ that wraps once around the band $B_\beta$ and that does not bound a disk in $S^3 \setminus \partial F(\beta)$. Here $C_{\frac{1}{3}}(\frac{1}{2}) \subseteq D^2$ denotes the circle with center~$\frac{1}{2}$ and radius $\frac{1}{3}$. The situation is shown in the upper part of \Cref{fig:cylinderwithbands} with $\varphi(\gamma)$ in red. 
\item \label{lemiii}
Moreover, either the two components of $\partial B_\beta \cap \partial F(\beta)$ belong to two different components of the link $\partial F(\beta)$ or we can choose $Z$ and $\varphi$ such that there exists a quasipositive Seifert surface~$G$ with the following properties: $G$ is a connected component of $F(\beta)$ with boundary~$J=\partial G$ one of the components of $\partial F(\beta)$, $B_\beta \subset G$ (in particular, $\partial B_\beta \cap \partial F(\beta) \subset J$) and $\gamma$ does not bound a disk in $S^3 \setminus J$.
\end{enumerate}
\end{lem}

We postpone the proof of \Cref{lem:cylinder} until the end of this section. Now, let $B = Z\cap \left(B_\alpha \cup B_\beta\right)$ for a cylinder $Z$ and bands $B_\alpha$ and $B_\beta$ as in \Cref{lem:cylinder} and define 
$F^\prime =\left(\left( F(\alpha ) \cup F(\beta) \right)\setminus B \right)\cup \varphi^{-1}\left(B^\prime\right), $ where $B^\prime$ is as in the lower part of \Cref{fig:cylinderwithbands}.
More precisely, let
\begin{align}\label{eq:defnD}
\begin{split}
B^\prime = &\left\{\left(a+t, t\right)\mid t \in[0,1],\, a \in \left[-\frac{2}{3}, -\frac{1}{3}\right]  \right\} \\
\cup &\left\{\left(a-t+ti, t\right)\mid t \in\left[0,\frac{1}{2}\right], \,a \in \left[\frac{1}{3}, \frac{2}{3}\right]  \right\} \\
\cup &\left\{\left(a-t+(1-t)i, t\right)\mid t \in\left[\frac{1}{2},1\right],\, a \in \left[\frac{1}{3}, \frac{2}{3}\right]  \right\}\subseteq D^2 \times [0,1].
\end{split}
\end{align}
In the definition of $B^\prime$ in \eqref{eq:defnD} (and only there), $i\in  \C$ denotes the imaginary unit. We smooth the corners of $\varphi^{-1}\left(B^\prime\right)$ to obtain a smooth surface $F^\prime$ and claim the following.

\begin{lem}\label{lem:operation}
Let $Z$, $\varphi$, $\gamma$, $B_\alpha$ and $B_\beta$ be given by \Cref{lem:cylinder}. Then the surface $$F^\prime =\left(\left( F(\alpha ) \cup F(\beta) \right)\setminus B \right)\cup \varphi^{-1}\left(B^\prime\right)$$ (with smoothed corners), where $B = Z\cap \left(B_\alpha \cup B_\beta\right)$ and $B^\prime$ is defined as in \eqref{eq:defnD}, is a quasipositive Seifert surface for a link $L^\prime =\partial F^\prime$ that is concordant, but not isotopic to $L=\partial F(\beta)$.
\end{lem}

\begin{proof}[Proof of \Cref{lem:operation}]
We will show the following three claims separately.
\begin{claim}\label{claim:quasipositivity}
The surface $F^\prime $ is quasipositive.
\end{claim}
\begin{claim}\label{claim:concordance}
The link $L^\prime=\partial F^\prime$ is concordant to the link $L=\partial F(\beta)$. 
\end{claim} 
\begin{claim}\label{claim:simplvolume}
The links $L$ and $L^\prime$ are not isotopic. 
\end{claim}

\begin{claimproof}{Proof of \Cref{claim:quasipositivity}}
Quasipositivity of $F^\prime$ can be shown using an isotopy as depicted in \Cref{fig:b,fig:c,fig:d}. A strongly quasipositive braid word $\delta$ for a quasipositive Seifert surface $F(\delta)$ that is ambient isotopic to $F^\prime$ can then be read off in \Cref{fig:d}.
\end{claimproof}

\begin{claimproof}{Proof of \Cref{claim:concordance}}
Observe that the surface $F^\prime$ is obtained from $F(\beta)$ by tying the knot $C$ with framing $0$ into the band $B_\beta$ of $F(\beta)$; see \Cref{fig:operationistyingknotinband}. This amounts to realizing the boundary of~$F^\prime$ as a satellite link with pattern $\partial F(\beta)$ and companion $C$. We explain this in detail. The link~$\partial F(\beta)$ can be viewed as a link in the solid torus $S^3 \setminus \nu(\gamma)$ given by the complement of an open tubular neighborhood~$\nu(\gamma)$ of $\gamma$ in $S^3$. We identify this solid torus with the standard solid torus $V = D^2 \times S^1 \subset S^3\subset \C^2$ by an orientation-preserving diffeomorphism that takes the preferred longitude of $S^3 \setminus \nu(\gamma)$ to $\{1\} \times S^1 \subset V$. Then $L^\prime = \partial F^\prime$ arising as a satellite link with pattern $\partial F(\beta)$ and companion $C$ means that $L^\prime$ is the image of $\partial F(\beta) \subset S^3 \setminus \nu(\gamma)\cong  V$ under an orientation-preserving embedding $ h\colon V =D^2 \times S^1  \hookrightarrow S^3$ that maps $\{0\}\times S^1$ to $C$ and $\{1\}\times S^1 $ to a curve that has linking number $0$ with $h\left(\{0\} \times S^1\right)$. For more details on satellite constructions, and in particular on the terms used here, see \cite[Sections 2E and 4D]{rolfsen_2003}. Our choices ensure that $h$ is faithful in Rolfsen's terminology and that the companion is really $C$ and not the reverse of $C$.

\captionsetup[subfigure]{labelfont=normalfont, labelformat = simple}
\begin{figure}[htbp]
     \centering
     \begin{subfigure}[b]{0.49\textwidth}
         \centering
         \begin{psfrags}
  \psfrag{a}{\textcolor{myred}{$\gamma$}}
\psfrag{b}{\textcolor{myblue}{$F(\beta)$}}
\psfrag{c}{$F(\alpha)$}
         \includegraphics[width=\textwidth]{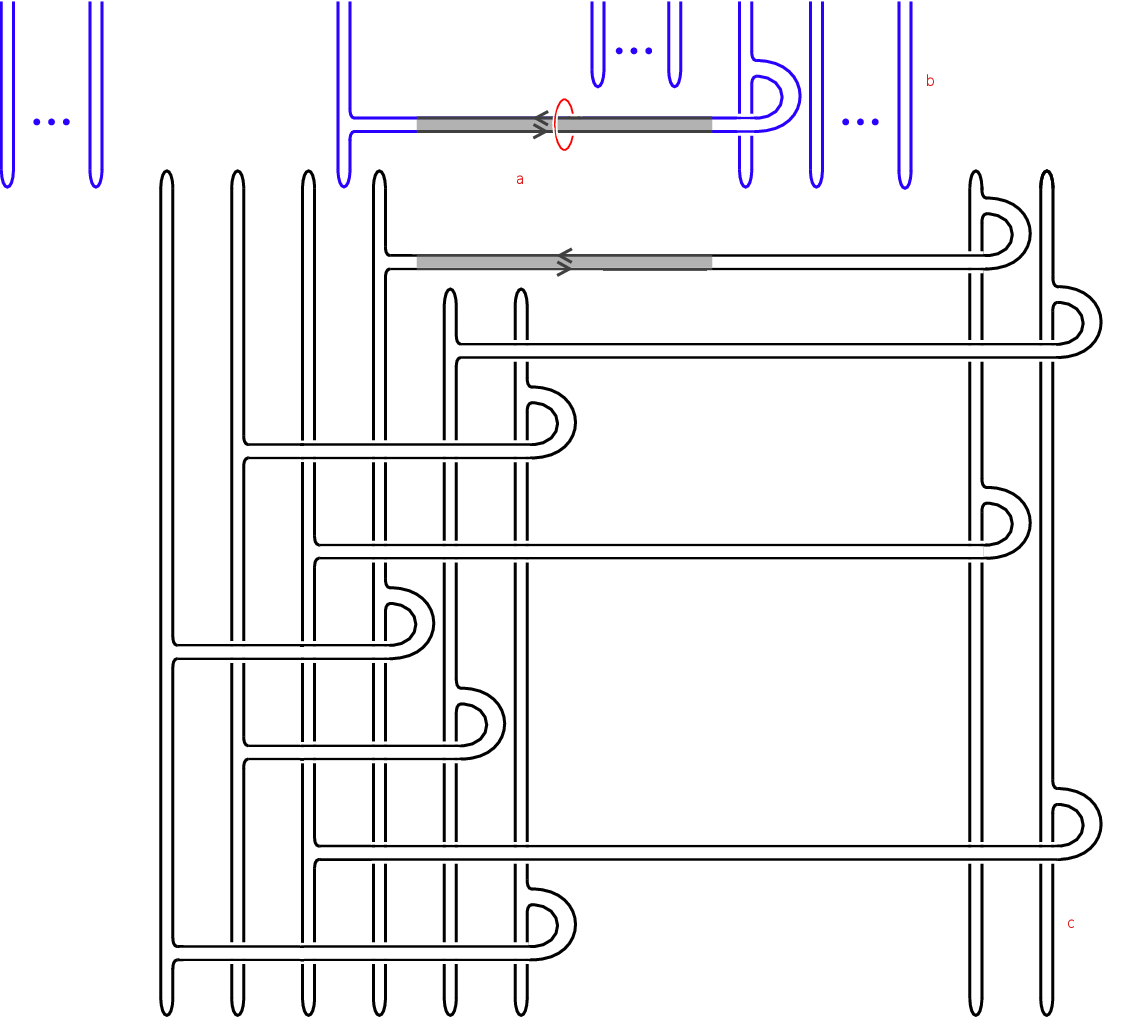}
         \end{psfrags}
         \caption{}
         \label{fig:a}
     \end{subfigure}
     \hfill
     \begin{subfigure}[b]{0.49\textwidth}
         \centering
         \begin{psfrags}
  \psfrag{c}{$F^\prime$}
         \includegraphics[width=\textwidth]{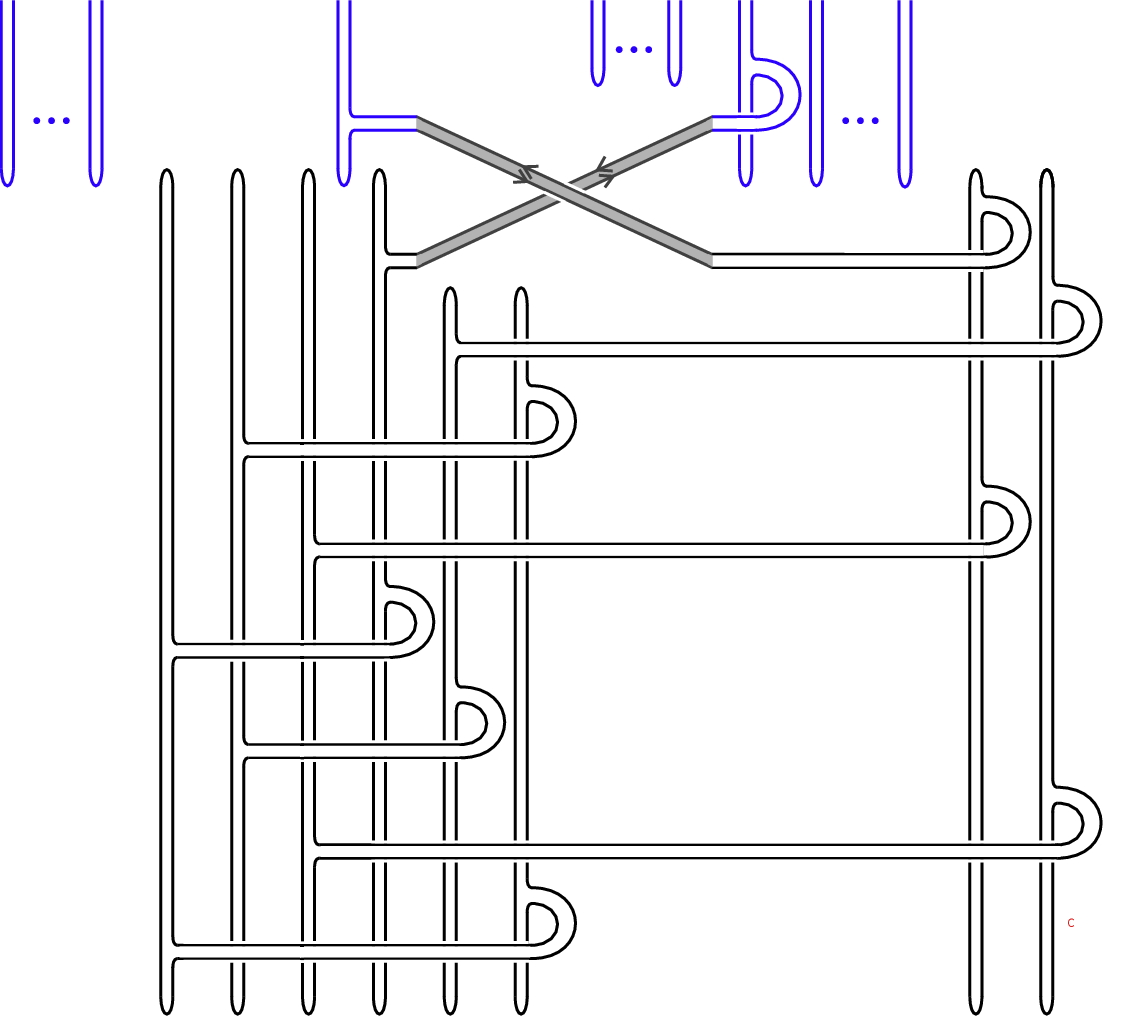}
         \end{psfrags}
         \caption{}
         \label{fig:b}
     \end{subfigure}
     \newline
     \begin{subfigure}[t]{0.49\textwidth}
         \centering
         \includegraphics[width=\textwidth]{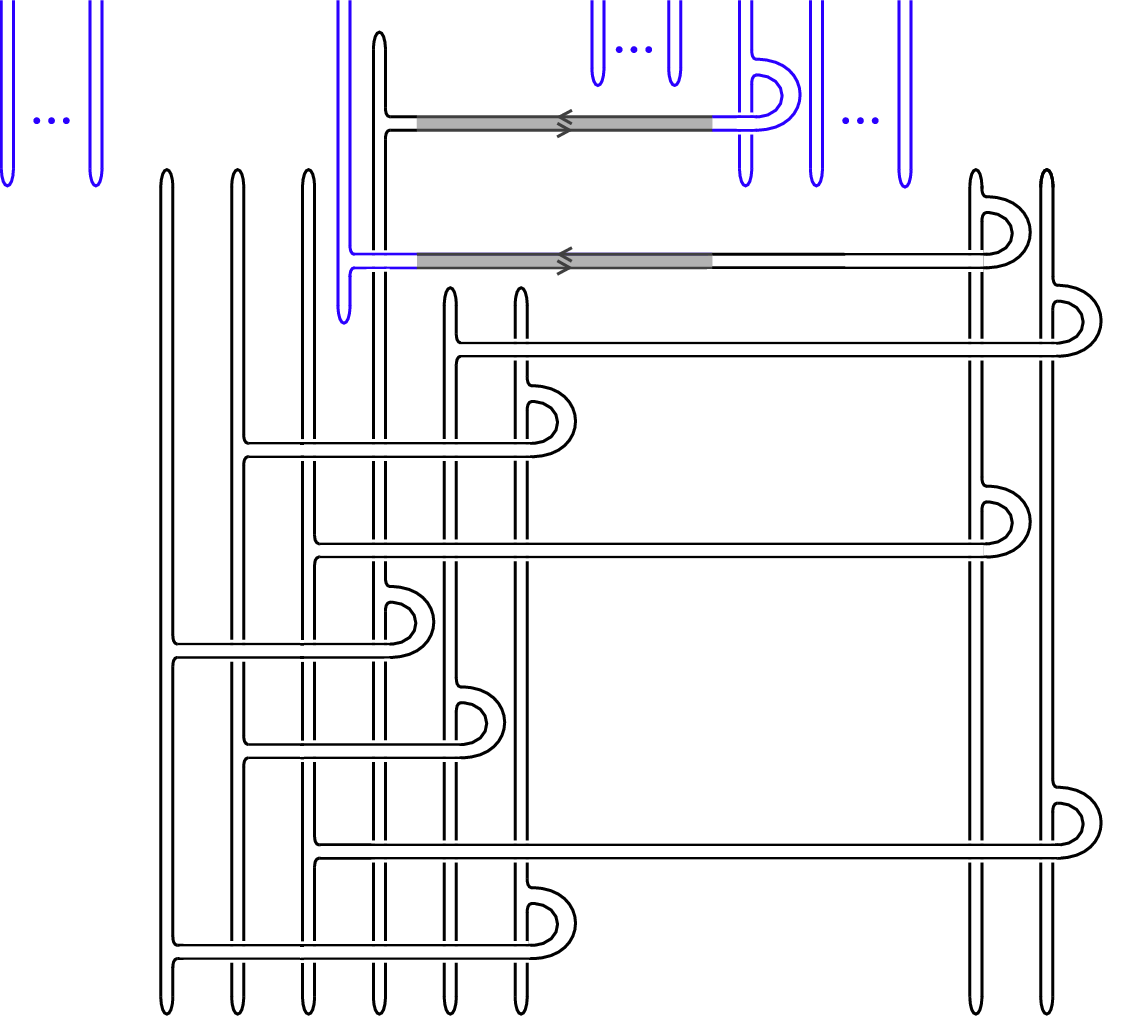}
         \caption{}
         \label{fig:c}
     \end{subfigure}
          \hfill
     \begin{subfigure}[t]{0.49\textwidth}
         \centering
         \includegraphics[width=\textwidth]{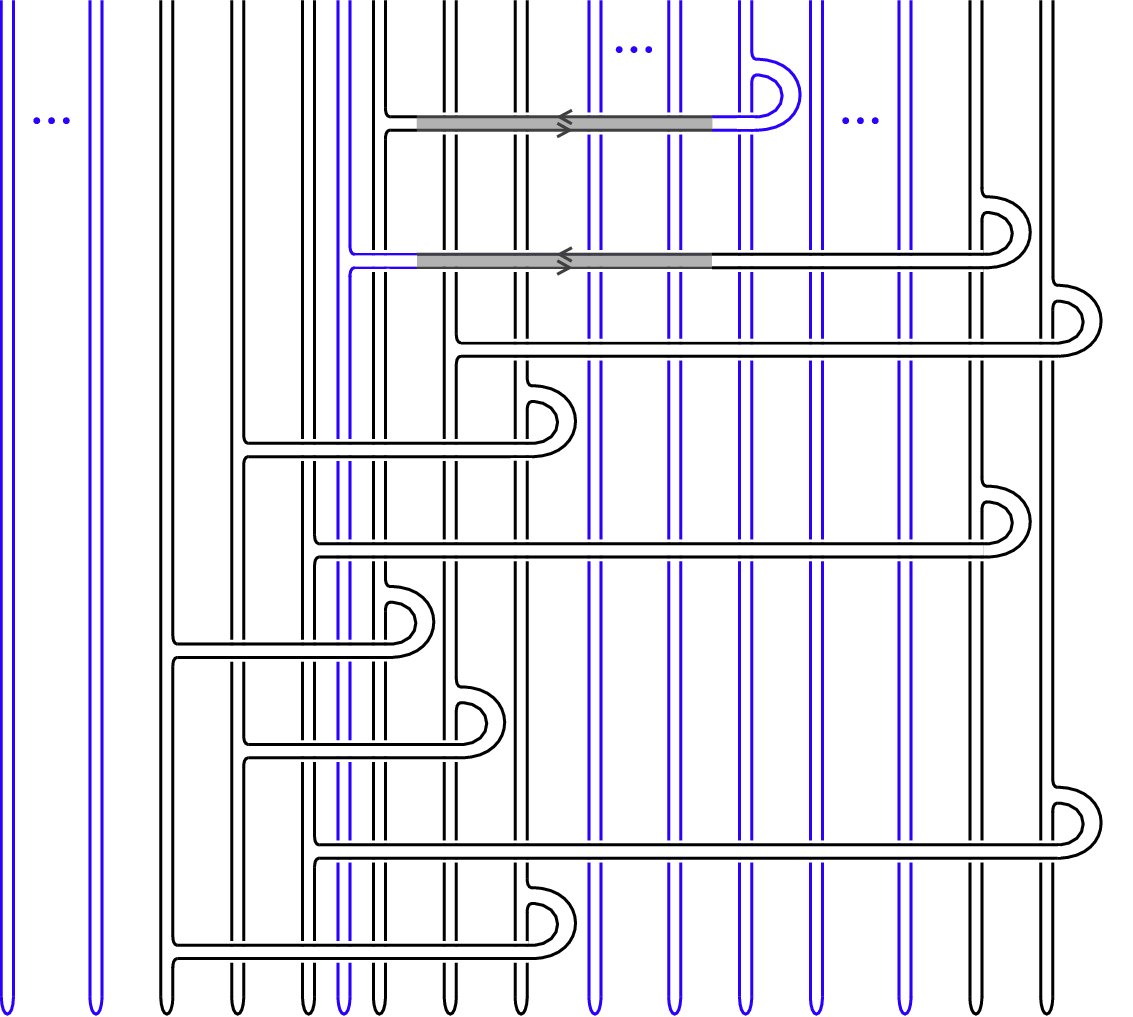}
         \caption{}
         \label{fig:d}
     \end{subfigure}
\caption{Quasipositivity of the surface $F^\prime$ defined from $F(\alpha)$ and $F(\beta)$ (see \Cref{lem:operation}). The surface $F(\alpha)$ is shown in black, the surface $F(\beta)$ in blue and $Z\cap (F(\alpha) \cup F(\beta))$ in grey. Subfigure \ref{fig:a} shows $F(\alpha)$, $F(\beta)$ and $\gamma$ as in \Cref{lem:operation}. Subfigure \ref{fig:b} shows the surface $F^\prime$ which is ambient isotopic to the canonical quasipositive Seifert surface in \ref{fig:d}; an intermediate stage of such an isotopy is shown in~\ref{fig:c}.}  
\label{fig:SchematicOperationPreservesQuasipositivity}
\end{figure}

\captionsetup[subfigure]{labelfont=normalfont, labelformat = simple}
\begin{figure}[htbp]
     \centering
     \begin{subfigure}[b]{0.49\textwidth}
         \centering
         \begin{psfrags}
\psfrag{b}{\textcolor{myblue}{$F(\beta)$}}
\psfrag{c}{$F(\alpha)$}
\psfrag{e}{C}
         \includegraphics[width=\textwidth]{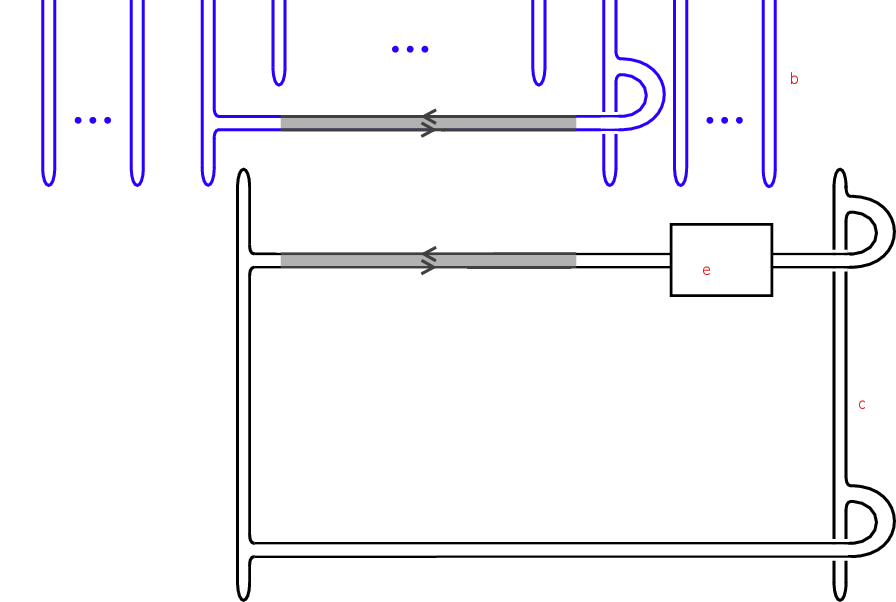}
         \end{psfrags}
         \caption{}
         \label{fig:5a}
     \end{subfigure}
     \hfill
     \begin{subfigure}[b]{0.49\textwidth}
         \centering
         \begin{psfrags}
  \psfrag{c}{$F^\prime$}
  \psfrag{e}{C}
         \includegraphics[width=\textwidth]{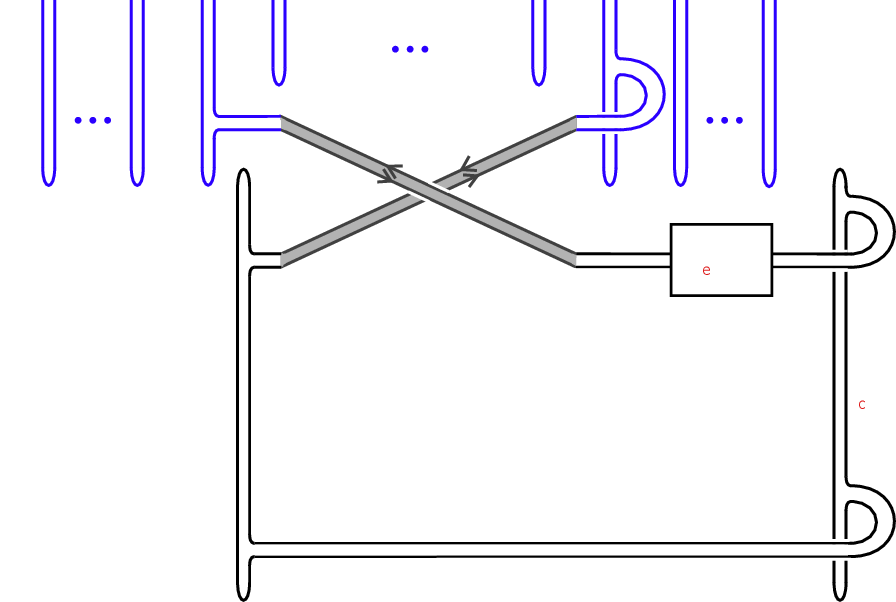}
         \end{psfrags}
         \caption{}
         \label{fig:5b}
     \end{subfigure}
     \newline
     \begin{subfigure}[b]{0.49\textwidth}
         \centering
         \begin{psfrags}
    \psfrag{e}{C}
         \includegraphics[width=\textwidth]{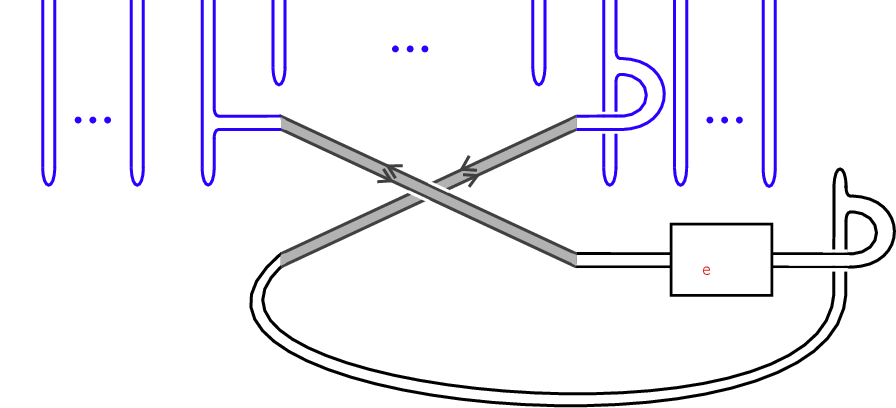}
		\end{psfrags}
         \caption{}
         \label{fig:5c}
     \end{subfigure}
          \hfill
     \begin{subfigure}[b]{0.49\textwidth}
         \centering
         \begin{psfrags}
    \psfrag{e}{C}
         \includegraphics[width=\textwidth]{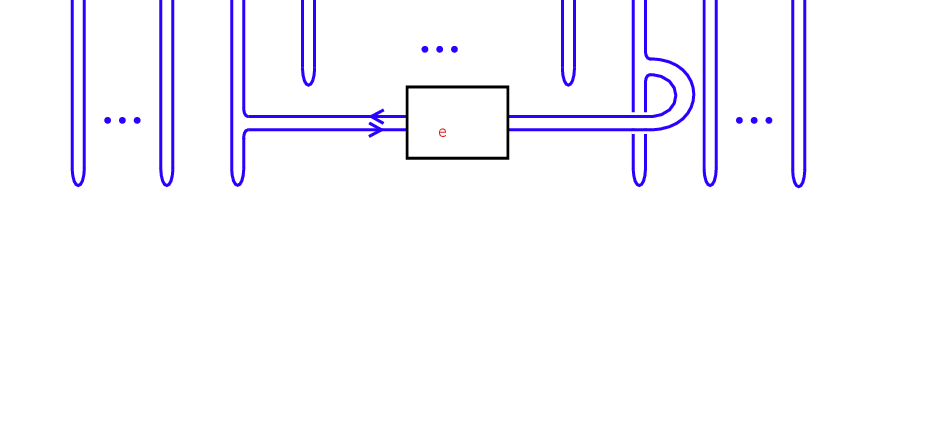}
		\end{psfrags}
         \caption{}
         \label{fig:5d}
     \end{subfigure}
\caption{The surface $F^\prime$ is obtained from $F(\beta)$ by tying the knot $C$ into the band~$B_\beta$ corresponding to the positive band word $\sigma_{i_1, j_1}$ of $\beta$. Subfigure \ref{fig:5a} shows a schematic representation of the surfaces $F(\alpha)$ and $F(\beta)$, and subfigure \ref{fig:5b} one of~$F^\prime$. Subfigures \ref{fig:5c} and \ref{fig:5d} indicate an ambient isotopy between $F^\prime$ and the surface~$F(\beta)$ with the knot $C$ tied into $B_\beta$ with framing $0$.}  
\label{fig:operationistyingknotinband}
\end{figure}

Now, it is a standard fact of concordance theory that, since $C$ is slice, \ie concordant to the unknot~$U$, there is a concordance between $L^\prime$, which is the satellite link with pattern $L$ and companion~$C$, and $L$, which is the satellite link with the same pattern but companion $U$. Indeed, if~$C$ and~$U$ are concordant via an annulus $A \cong S^1 \times [0,1] \subset S^3 \times [0,1]$, then we can identify $\left(S^3 \setminus \nu (\gamma) \right)\times [0,1]$ with a tubular neighborhood of $A$ in $S^3 \times [0,1]$ and the image of $L \times [0,1]$ in~$S^3 \times [0,1]$ under this identification provides us with a concordance between the two satellite links.
\end{claimproof}

\begin{claimproof}{Proof of \Cref{claim:simplvolume}}
On the other hand, we claim that since $C$ is not isotopic to $U$, the satellite links~$L^\prime = \partial F^\prime$ and $L=\partial F(\beta)$ are not isotopic. To prove this, we distinguish two cases. Note that the two components of $\partial B_\beta \cap \partial F(\beta)$ do not necessarily belong to the same component of $\partial F(\beta)$.

We first assume that they do, which is the case, for example, if $\partial F(\beta)$ is a knot; and we can further assume that $\gamma$ does not bound a disk in the complement of this component~$J$ of $\partial F(\beta)$ in $S^3$ (see \ref{lemiii} in \Cref{lem:cylinder}). In this case our satellite operation modifies up to ambient isotopy only the component~$J$ of $\partial F(\beta)$ by applying a satellite operation with companion $C$ and pattern~$J$. It follows from work of Kouno--Motegi \cite[Theorem 1.1]{kounomotegi} that this satellite operation on the non-isotopic knots $C$ and $U$ produces non-isotopic components $J(C)$ and $J(U)=J$ of $L=\partial F^\prime$ and~$L^\prime=\partial F(\beta)$. Here we need the assumptions on $\gamma$, which imply that the pattern $J$ we use in the satellite construction has wrapping number strictly greater than $1$. The \emph{wrapping number}~$\omega_V(P)$ of a pattern $P$ in the solid torus $V=D^2 \times S^1$ is the minimal geometric intersection number of $P$ and a generic meridional disk of $V$. Recall that we can consider $\partial F(\beta)$ and hence also its component~$J$ as a link/knot in the solid torus $S^3 \setminus \nu(\gamma)$, which we identify with $V$ by an orientation-preserving diffeomorphism that takes the preferred longitude of $S^3 \setminus \nu(\gamma)$ to $\{1\} \times S^1$. Then $\gamma$ not bounding a disk in $S^3\setminus J$ implies that~$J$ geometrically intersects nontrivially every meridional disk in $S^3\setminus \nu(\gamma) \cong V$, so $\omega_V(J) \neq 0$. Since the algebraic winding number of $J$ in $S^3\setminus \nu(\gamma) \cong V$ is zero (thus even), we get $\omega_V(J) > 1$.

Now, suppose that the two components of $\partial B_\beta \cap \partial F(\beta)$ belong to two different components $L_1$ and $L_2$ of the link $\partial F(\beta)$. The satellite operation then has the effect of tying $C$ into both of these components (up to orientation), \ie the resulting link has components $L_1 \# C$ and $L_2 \# C^r$, where~$C^r$ denotes $C$ with the reversed orientation, and all other components unchanged. Note that for our particular choice $C = \mnine$ we have $C = C^r$ \cite{knotinfo}. We clearly obtain a link $\partial F^\prime$ that is not isotopic to $\partial F(\beta)$.
\end{claimproof}
This concludes the proof of \Cref{lem:operation}.
\end{proof}

\begin{rem}\label{rem:fibered}
The link $\partial F^\prime$ constructed in \Cref{lem:operation} is not fibered. Indeed, we can use the Seifert--van Kampen theorem to show that on the level of fundamental groups $\pi_1(S^3 \setminus C)\hookrightarrow \pi_1(S^3 \setminus F^\prime)$, thus~$\pi_1(S^3 \setminus F^\prime)$ is not free. The constructed Seifert surface $F^\prime$ is thus not a fiber surface for $\partial F^\prime$, and since it is a Seifert surface with maximal Euler characteristic for $\partial F^\prime$, the link $\partial F^\prime$ cannot be fibered.
\end{rem}

\begin{rem}\label{rem:simplvolume}
As an anonymous referee pointed out, in \Cref{lem:operation} we could actually show that the constructed link $L\pr = \partial F^\prime$ has strictly larger complexity than the link $L=\partial F(\beta)$ in the following sense. For a knot $K$, let $ \lVert K \rVert$ denote the sum of the hyperbolic volumes of the hyperbolic pieces of the JSJ decomposition \cite{jacoshalen,johannson} of the knot exterior (which up to multiplication by a constant is equal to the simplicial volume of (the exterior of)~$K$  \cite{Soma1981,gromovvolume,thurston2022geometry}), and for a link $L = L^1 \cup \dots \cup L^r$ of $r$ components, let $c(L) = \sum_{j=1}^r \lVert L^j \rVert.$
Then, based on the arguments in the proof of Claim 3 above, we can show that $c( L\pr )  > c( L )$. We will briefly explain why below. Note that this stronger statement directly implies \Cref{thm:main} by an iterative application, but carrying out the details of the argument did not seem to strictly shorten our original argument, so we kept it. Indeed, in the case where the two components of $\partial B_\beta \cap \partial F(\beta)$ belong to the same component of the link $L=\partial F(\beta)$, we have $\lVert J(C) \rVert   > \lVert J \rVert$ and hence $c( L^\prime)  > c( L )$, since the companion $C=\mnine$ of the satellite operation is hyperbolic \cite{knotinfo}, and since the simplicial volume is additive if two $3$-manifolds are glued along a torus that is incompressible in both manifolds; see~\cite{Soma1981,gromovvolume,kuessner} and \cite[Theorem 3]{bucheretal} for a more general version on $n$-manifolds. In the other case, we can use the additivity of the simplicial volume under the connected sum of knots \cite[Theorem 2]{Soma1981} and again the hyperbolicity of~$C=\mnine$.
\end{rem}

We end this section with the deferred technical proof of \Cref{lem:cylinder}. The reader may choose to skip it on first reading. 

\begin{proof}[Proof of \Cref{lem:cylinder}]
We will prove a slightly abbreviated version. 
\begin{claim}\label{claim:mainlem}
Let $F(\beta)$ be the quasipositive Seifert surface associated to a strongly quasipositive braid word $\beta$ such that $\partial F(\beta)$ is not an unlink. We can choose a cylinder $Z \subset S^3$ and an orientation-preserving diffeomorphism $\varphi \colon Z \to D^2 \times [0,1]$ such that 
\begin{enumerate}[label=(\roman*)]
\item $Z \cap F(\beta) =Z\cap  B_\beta$ verifies \eqref{eq:cylindertriple}, where $B_\beta$ is a band of $F(\beta)$,
\item 
$\gamma \eqdef \varphi^{-1}(  C_{\frac{1}{3}}(\frac{1}{2})\times \{\frac{1}{2}\})$ 
is an unknot in $S^3 \setminus F(\beta)$ that wraps once around the band $B_\beta$ and that does not bound a disk in $S^3 \setminus \partial F(\beta)$.
\item 
Moreover, either the two components of $\partial B_\beta \cap \partial F(\beta)$ belong to two different components of the link $\partial F(\beta)$ or we can choose $Z$ and $\varphi$ such that there exists a quasipositive Seifert surface~$G$ with the following properties: $G$ is a connected component of $F(\beta)$ with boundary~$J=\partial G$ one of the components of $\partial F(\beta)$, $B_\beta \subset G$ and $\gamma$ does not bound a disk in~$S^3 \setminus J$.
\end{enumerate}
\end{claim}

Note that it is enough to show \Cref{claim:mainlem} for the following two reasons. First, up to conjugation of $\beta$ or ambient isotopy of $F(\beta)$, that is, up to a different choice of $Z$ and $\varphi$, we can choose any positive band word of $\beta$ to be the first one. Second, the position of $F_\alpha$ and $B_\alpha$ in \Cref{lem:cylinder} can be easily arranged by an ambient isotopy of $F(\alpha)$ as soon as $F(\beta)$ satisfies the conclusions of \Cref{claim:mainlem}. 

\begin{claimproof}{Proof of \Cref{claim:mainlem}}
Let $\beta = \prod_{k=1}^\ell \sigma_{i_k, j_k} \in B_n$ for some $n \geq 1$.
We claim that one of the following is true.
\begin{itemize}[leftmargin=1.7cm]
\item[Case 1:] There exists a half-twisted band in $F(\beta)$ corresponding to one of the positive band words~$\sigma_{i_k, j_k}$, $k \in \{1, \dots, \ell\}$, of $\beta$ such that the two arcs obtained as intersection of the boundary of this band with $\partial F(\beta)$ belong to two different components of the link $\partial F(\beta)$.
\item[Case 2:] The surface $F(\beta)$ is a disjoint union of quasipositive Seifert surfaces each of which has only one boundary component.
\end{itemize}
Here is the argument why: If the half-twisted bands in $F(\beta)$ are such that for each of them the two arcs obtained as intersection of the boundary of the band with $L=\partial F(\beta)$ belong to the same component of $L$, then for each of the disks in $F(\beta)$, there is a component of $L$ such that the entire boundary of the disk belongs to that component. All the bands emanating from a disk must belong to the same component of $L$ as the boundary of that disk, and so each connected component of~$F(\beta)$ must have only one component of $L$ (a knot) as its boundary. 

Let us first assume that we are in Case 2, so that $F(\beta)$ is a disjoint union of quasipositive Seifert surfaces each of which has a knot as its boundary. By assumption, $F(\beta)$ is not a union of disks. Let~$G$ be one of the connected components of $F(\beta)$ which is not a disk. We claim that we can choose~$Z$ and~$\varphi$ as in \eqref{eq:cylindertriple} such that $B_\beta = Z\cap F(\beta)$ is a band of $G \subset F(\beta)$ corresponding to a positive band word of $\beta$ and such that $\gamma = \varphi^{-1}(  C_{\frac{1}{3}}(\frac{1}{2})\times \{\frac{1}{2}\})$
is an unknot in $S^3 \setminus F(\beta)$ that does not bound a disk in $S^3 \setminus G$. \Cref{claim:mainlem} will then follow from the more general statement in \Cref{lem:step2}, which can be shown using a standard innermost circle argument. Note that quasipositive Seifert surfaces are of minimal genus \cite{rudolphslice, kronheimermrowka} and thus incompressible. For the reader's convenience, we will prove \Cref{lem:step2} below.

\begin{lem}\label{lem:step2}
Let $F$ be an incompressible Seifert surface for a link $L$ and let $\gamma \subset S^3 \setminus F$ be a simple closed curve. If there exists a disk in $S^3 \setminus L$ with boundary $\gamma$, then there also exists a disk in $S^3 \setminus F$ with boundary $\gamma$. 
\end{lem}

So if we find $Z$ and $\varphi$ as in \eqref{eq:cylindertriple} such that $\gamma = \varphi^{-1}(  C_{\frac{1}{3}}(\frac{1}{2})\times \{\frac{1}{2}\})$ does not bound a disk in $S^3 \setminus G$, it does not bound a disk in $S^3 \setminus \partial G$ either. To conclude the proof of \Cref{claim:mainlem} in Case 2, it remains to show that this is always possible. 

To that end, we claim that there exists a positive band word $\sigma_{i_\ell, j_\ell}$ in $\beta$ which fulfills the following condition: the core of the half-twisted band $B_\beta$ of $F(\beta)$ associated to $\sigma_{i_\ell, j_\ell}$ together with an arc in~$G$ the interior of which misses $B_\beta$ unite to a simple closed curve $\eta$ in $G$ so that $\eta$ and a meridian of $B_\beta$ have linking number $\pm 1$. Under a diffeomorphism $\varphi \colon Z \to D^2 \times S^1$ as in \eqref{eq:cylindertriple}, we can identify any of the half-twisted bands in $G$ with $\left[\frac{1}{3}, \frac{2}{3}\right] \times [0,1] \subset D^2 \times [0,1]$ for an appropriately chosen cylinder $Z \subset S^3$. A \emph{meridian} of a band $B_\beta$ of $G$ for us is then a simple closed curve in $S^3 \setminus G$ which is isotopic to $\varphi^{-1}(  C_{\frac{1}{3}}(\frac{1}{2})\times \{\frac{1}{2}\})$ under this identification and the \emph{core} of $B_\beta$ is $\varphi^{-1}\left(\{\frac{1}{2}\}\times \left[0,1\right]\right)$. If we find a band $B_\beta$ with the above requirements, the condition on the linking number of $\eta$ and the meridian $\gamma = \varphi^{-1}(  C_{\frac{1}{3}}(\frac{1}{2})\times\left\{\frac{1}{2}\right\})$ of $B_\beta$ will imply that $\gamma$ cannot bound a disk in $S^3 \setminus G$.

The quasipositive Seifert surface $G$ deformation retracts onto a graph $\Gamma$ in $S^3$ whose vertices correspond to the disks of $G$ and whose edges of $\Gamma$ correspond to the bands of $G$, respectively. Since $G$ is not a disk, $\Gamma$ is not a tree, hence there must exist an edge $e$ of $\Gamma$ such that $\Gamma \setminus e$ is not disconnected. This edge $e$ together with a path in $\Gamma$ connecting the vertices of $e$, but missing the interior of $e$, forms a simple closed curve in $\Gamma$ which has linking number $\pm 1$ with its meridian in~$S^3 \setminus \Gamma$. For the desired positive band word $\sigma_{i_\ell, j_\ell}$, we can take the one corresponding to the edge~$e$.

In summary, we have shown that in Case 2 we can choose $Z$ and $\varphi$ as in \eqref{eq:cylindertriple}, such that $B_\beta = Z\cap F(\beta)$ is a band of $F(\beta)$ corresponding to a positive band word of $\beta$ that is contained in one of these Seifert surfaces $G$ and such that its meridian $\gamma$ is an unknot in $S^3 \setminus F(\beta)$ that does not bound a disk in $S^3 \setminus G$ and therefore by \Cref{lem:step2} does not bound a disk in~$S^3 \setminus \partial G$ either.

Now suppose that we are in Case 1, so that we can choose a cylinder $Z \subset S^3$ and an orientation-preserving diffeomorphism $\varphi$ as in \eqref{eq:cylindertriple} such that $B_\beta = Z\cap F(\beta)$ is a band of $F(\beta)$ corresponding to a positive band word of $\beta$ where the two components of $\partial B_\beta \cap \partial F(\beta)$ belong to two different components of the link $\partial F(\beta)$. 
We claim that $\gamma = \varphi^{-1}(  C_{\frac{1}{3}}(\frac{1}{2})\times \{\frac{1}{2}\})$ is an unknot in $S^3 \setminus F(\beta)$ which does not bound a disk in $S^3 \setminus F(\beta)$ and thus, by \Cref{lem:step2}, does not bound a disk in $S^3 \setminus \partial F(\beta)$.

Similar as in the argument in Case 2 above, the quasipositive Seifert surface $F(\beta)$ deformation retracts onto a graph $\Gamma$ in $S^3$ whose vertices correspond to the disks of~$F(\beta)$ and whose edges correspond to the bands of~$F(\beta)$, respectively. Consider the edge $e$ of $\Gamma$ that corresponds to the band~$B_\beta$. Since the two components of $\partial B_\beta \cap \partial F(\beta)$ belong to two different components of~$\partial F(\beta)$, this edge must be part of a cycle in $\Gamma$. This cycle is a simple closed curve in $\Gamma$ which has linking number $\pm 1$ with its meridian in $S^3 \setminus \Gamma$, so the core of the band $B_\beta$ together with a certain arc in~$F(\beta)$ unite to form a simple closed curve in $F(\beta)$ which has linking number $\pm 1$ with the meridian of $B_\beta$ and the claim follows.
\end{claimproof}

\begin{proof}[Proof of \Cref{lem:step2}]
Let $D \subset S^3 \setminus L$ be a disk with $\partial D = \gamma \subset S^3 \setminus F$ and suppose that $D$ intersects~$F$ non-trivially. 
Up to an ambient isotopy, we can assume that $D$ and $F$ intersect transversally in~$S^3$~\cite{guillemin-pollack}. 
Then $D \cap F$ is a one-dimensional compact manifold, so a finite collection of simple closed curves. Using the $2$-dimensional Schoenflies theorem \cite[Section 2A]{rolfsen_2003}, each of these simple closed curves bounds a disk in $D$. Let $R$ be one of the simple closed curves in $D \cap F$ which is innermost in the sense that the interior of the disk $D^\prime$ bounded by $R$ in $D$ misses $F$. Since $F$ is incompressible, $R$ must also bound a disk $D^{\prime\prime}$ in $F$. The union of $D^\prime$ and $D^{\prime\prime}$ forms a $2$-sphere which, by the $3$-dimensional Schoenflies theorem \cite[Section 2F]{rolfsen_2003}, bounds a ball in $S^3$. We can push~$F$ along this ball to obtain a Seifert surface $F^\prime$ for $L$ which is ambient isotopic to $F$ and intersects~$D$ in less simple closed curves than $F$. We repeat this process until we obtain a Seifert surface $F^{\prime \prime}$ for~$L$ which is ambient isotopic to $F$ and disjoint from $D$. 
In summary, up to an ambient isotopy we found the desired disk in $S^3 \setminus F$.
\end{proof}
This finishes the proof of \Cref{lem:cylinder}.
\end{proof}

\section{Proof of \texorpdfstring{\Cref{thm:main}}{Theorem 1}}\label{sec:proofthm}

Recall the statement of \Cref{thm:main}: Every strongly quasipositive link other than an unlink is smoothly concordant to infinitely many pairwise non-isotopic strongly quasipositive links.

\begin{proof}[Proof of \Cref{thm:main}]
Let $L$ be a non-trivial link, let $F$ be a quasipositive Seifert surface for $L$ and let $C=\mnine$ such that the annulus $A\left(C, -1\right)$ is quasipositive (see \Cref{sec:qpannuli}). Let $\alpha$ be as in~\eqref{eq:alphabraidword} from \Cref{sec:qpannuli} such that $A\left(C, -1\right)$ is ambient isotopic to $F(\alpha)$, and, as in \Cref{sec:tyingknots}, choose a strongly quasipositive braid word $\beta \in B_n$, $n \geq 1,$ such that $F$ is ambient isotopic to~$F(\beta)$.

The statement of \Cref{thm:main} will follow from an iterative application of the operation defined in \Cref{sec:tyingknots}: Given two quasipositive Seifert surfaces $F(\alpha)$ and $F(\beta)$ for links $\partial F(\alpha)$ and $\partial F(\beta)$, respectively, using \Cref{lem:cylinder} and \Cref{lem:operation} we can construct a quasipositive Seifert surface $F^\prime$ with boundary a link $\partial F^\prime$ that is concordant, but not isotopic to $\partial F(\beta)$. We will denote this surface by $F(\alpha)\oplus F(\beta) \eqdef F^\prime$. We define $F_0 = F(\beta)$, $F_1 =F^\prime =  F(\alpha)\oplus F(\beta)$ and, inductively, $F_{i+1} = F(\alpha) \oplus F_{i}$ for all $i \geq 1$. The links $\{\partial F_i\}_{i \geq 0}$ are all in the same concordance class (the class of $L = \partial F_0 = \partial F(\beta)$), but pairwise non-isotopic. Let us make this more precise.
Recall that we construct $F_1 = F^\prime$ by tying the knot $C$ into a specific band $B_\beta$ of $F(\beta)$ (see \Cref{sec:tyingknots}) and obtained~$\partial F_1$ as a satellite link with pattern $\partial F(\beta)$ and companion $C$ (see the proof of \Cref{lem:operation}). The surfaces $F(\beta)$ and $F_1$ are both quasipositive Seifert surfaces (see \Cref{lem:operation}) that can again be put in a position where we can choose a cylinder $Z_1 \subset S^3$ and an orientation-preserving diffeomorphism $\varphi_1 \colon Z_1 \to D^2 \times [0,1]$ that satisfies a condition equivalent to the one in \eqref{eq:cylindertriple} from \Cref{sec:tyingknots} for $Z$ and~$\varphi$. Then, by \Cref{lem:cylinder} and \Cref{lem:operation}, we can choose $Z_1$ and $\varphi_1$ such that the surface $F_2$ obtained from $F_1$ by tying the knot $C$ into a specific band of $F_1$ is quasipositive and has as boundary $\partial F_2$ a link that is concordant, but not isotopic to $\partial F_1$. Inductively, $F_{i+1}$ is obtained from $F_i$ by tying~$C$ into a band of $F_i$ such that $\partial F_{i+1}$ is concordant, but not isotopic to $\partial F_i$. However, up to an ambient isotopy of $F_1$ and $F_2$, we can assume that for $F_2$ we tie $C$ into the same band of $F_1$ as $B_\beta$ of $F(\beta)$.\footnote{To make the term ``same band'' more precise, we could fix an abstract embedding of the surface $F(\beta)$ throughout.} Note that the additional assumptions about this band needed for an application of \Cref{lem:operation} (and proven in \Cref{lem:cylinder} for $F_0$) will still be satisfied for $F_1$. In particular, a meridian of that band (refer to the proof of \Cref{lem:cylinder} for a precise definition) will not bound a disk in $S^3 \setminus \partial F_1$. This comes from the fact that $F_0$ and $F_1$ deformation retract onto graphs $\Gamma_0$ and $\Gamma_1$ in $S^3$ (where vertices of~$\Gamma_i$ correspond to disks of the surfaces $F_j$ for $j\in \{0,1\}$, and edges to bands) which only differ in that one edge $e$ of $\Gamma_0$ is subdivided into a connected sequence of $v$ new vertices and $v+1$ edges $e_1, e_2, \dots, e_{v+1}$ (coming from the annulus $A(C,-1) \cong F(\alpha)$). The subgraph $\Gamma_0 \setminus e$ is connected if and only if $\Gamma_1 \setminus e_k$ is connected for any $k \in \{1, \dots, v+1\}$; and both the bands corresponding to $e_1$ and $e_{v+1}$ will do when applying \Cref{lem:operation}.

As in the proof of \Cref{claim:concordance} in the proof of \Cref{lem:operation}, we now distinguish two cases to show that the constructed links $\partial F_i$ are pairwise non-isotopic. If the two components of $\partial B_\beta \cap \partial F(\beta)$ belong to the same component $J$ of the link $\partial F(\beta)$, then we actually obtain $\partial F_2$ as $P_{J}(C \# C)$, the satellite link with pattern $P_J = J$, but companion $C \# C$. Inductively, we get $$\partial F_{i} = P_{J}(\underbrace{C \# C \dots \# C)}_{i \text{ times}}.$$ 
Since $C$ is not isotopic to the unknot, the connected sums of $i$ and $k$ copies of~$C$, respectively, are not isotopic for $i \neq k$ (\eg by arguing with the additivity of the nonzero Seifert genus of $C$). It thus follows from \cite[Theorem 1.1]{kounomotegi} that
\begin{align*}
\partial F_{i} = P_{J}(\underbrace{C \# C \dots \# C)}_{i \text{ times}} \qquad \text{and} \qquad \partial F_{k} = P_{J}(\underbrace{C \# C \dots \# C)}_{k \text{ times}}
\end{align*} 
are not isotopic for $i \neq k$. Again, it is important that the pattern $J$ has wrapping number strictly greater than $1$ under the assumptions of \Cref{lem:operation} for $F(\beta)$. If the two components of $\partial B_\beta \cap \partial F(\beta)$ belong to different components $L_1$ and $L_2$ of the link $\partial F(\beta)$, then, by induction, the link $\partial F_i$ has components 
\begin{align*}
L_1 \# \underbrace{C \# C \dots \# C}_{i \text{ times}} 
\qquad \text{and} \qquad L_2 \# \underbrace{C^r \# C^r \dots \# C^r}_{i \text{ times}}
\end{align*} 
and so again $\partial F_i$ and $\partial F_k$ are not isotopic for $i \neq k$.
\end{proof}

\begin{rem}\label{thm:mainsurfaces}
A careful generalization of our proof of \Cref{thm:main} and in particular \Cref{lem:operation} shows the following slightly stronger statement.
Let $F$ be a quasipositive Seifert surface for a link $L$ other than an unlink. Then there exists an infinite family $\left\{F_i \times [0,1]\right\}_{i \geq 1}$ of smoothly and properly embedded $3$-manifolds $F_i \times [0,1]$ in $S^3 \times [0,1]$ where each $F_i$ is a surface such that $F_i \times \{0\}= F \times \{0\}$, $ F_i \times \{1\}= F_i^\prime \times \{1\}$ for some quasipositive Seifert surface $F_i^\prime$ with boundary $\partial F_i^\prime $ that is non-isotopic to $ \partial F = L$, and such that the boundaries 
$\partial F_i^\prime $ and $ \partial F_j^\prime$ are non-isotopic for $i \neq j$.
\end{rem}

We conclude with the promised details on the construction in \Cref{rem:BakerHedden}.

\begin{rem}\label{rem:oldconstrlonger}
We elaborate on how to construct an infinite family of concordant, pairwise non-isotopic, strongly quasipositive knots using Whitehead doubles. Note once again that the statement of \Cref{thm:main} is stronger. Let~$C$ be a non-trivial slice knot with $\TB(C)=-1$, \eg $C=\mnine$ (see \Cref{sec:qpannuli}). For $m \geq 1$, let $K_m$ be the connected sum of $m$ copies of $C$. Then for every~$m \geq 1$, the knot $K_m$ is slice, since $C$ is, and by inductively using the formula $\TB(L_1 \# L_2) = \TB(L_1)+\TB(L_2)+1$ for any knots $L_1, L_2$ \cite{etnyre_honda,torisu}, we have $\TB(K_m) = -1$. Note that $K_m$ and $K_n$ are not isotopic for $m \neq n$, since $C$ is non-trivial. Using the notation from \cite{hedden-whitehead}, we now define $J_m \eqdef D_+\left(K_m,-1\right)$ as the positive $(-1)$-twisted Whitehead double of $K_m$. Then $\{J_m\}_{m \geq 1}$ is the desired infinite family. Indeed, using $\TB(K_m) \geq -1$, by work of Rudolph (see \eg~\cite[102.4]{rudolph_handbook}) each $J_m$ is strongly quasipositive. Moreover, as $K_m$ and $K_n$ are not isotopic for $m \neq n$, the knots $J_m$ and~$J_n$ are not isotopic either for such $m$ and $n$ \cite{kounomotegi}. On the other hand, $J_m$ and $J_n$ are concordant for every $m \neq n$ as $K_m$ and $K_n$ are. Indeed, as noted in the proof of \Cref{lem:operation}, the satellite operation induces a well-defined map on the concordance group of which taking the positive twisted Whitehead double is a special case.
\end{rem}

\bibliographystyle{alpha}
\bibliography{bibliography}

@book{rolfsen_2003,
	author = {Rolfsen, D.},
	doi = {10.1090/chel/346},
	publisher = {AMS Chelsea},
	title = {Knots and Links},
	year = {2003}
}

@article{Baker_2015,
	author = {Baker, K.~L.},
	doi = {10.1112/jtopol/jtv024},
	fjournal = {Journal of Topology},
	issn = {1753-8416},
	journal = {J. Topol.},
	mrclass = {57M25 (57N70)},
	mrnumber = {3465837},
	mrreviewer = {Laurence R. Taylor},
	number = {1},
	pages = {1–4},
	title = {A note on the concordance of fibered knots},
	url = {https://doi.org/10.1112/jtopol/jtv024},
	volume = {9},
	year = {2016}
}

@article{fox_1962,
	author = {Fox, R. H.},
	doi = {10.1016/0016-0032(62)90835-9},
	journal = {Topology of 3-manifolds and related topics (Proc. The Univ. of Georgia Institute, 1961)},
	number = {4},
	pages = {168–176 },
	title = {Some problems in knot theory},
	volume = {274},
	year = {1962}
}

@article{hedden05,
	author = {Hedden, M.},
	doi = {10.1142/S0218216510008017},
	fjournal = {Journal of Knot Theory and its Ramifications},
	issn = {0218-2165},
	journal = {J. Knot Theory Ramifications},
	mrclass = {57M27 (57M25 57N70)},
	mrnumber = {2646650},
	number = {5},
	pages = {617–629},
	title = {Notions of positivity and the {O}zsváth-{S}zabó concordance invariant},
	url = {https://doi.org/10.1142/S0218216510008017},
	volume = {19},
	year = {2010}
}

@inproceedings{litherland,
	author = {Litherland, R. A.},
	booktitle = {Topology of low-dimensional manifolds ({P}roc. {S}econd {S}ussex {C}onf., {C}helwood {G}ate, 1977)},
	mrclass = {57M25},
	mrnumber = {547456},
	mrreviewer = {Lee Rudolph},
	pages = {71–84},
	publisher = {Springer, Berlin},
	series = {Lecture Notes in Math.},
	title = {Signatures of iterated torus knots},
	volume = {722},
	year = {1979}
}

@article{alexander,
	author = {Alexander, J.~W.},
	doi = {10.1073/pnas.9.3.93},
	issn = {0027-8424 1091-6490},
	journal = {Proceedings of the National Academy of Sciences of the United States of America},
	month = mar,
	number = {3},
	pages = {93–95},
	risfield_0_db = {PubMed},
	risfield_1_la = {eng},
	risfield_2_an = {16576674},
	risfield_3_u1 = {16576674[pmid]},
	risfield_4_u2 = {PMC1085274[pmcid]},
	title = {A Lemma on Systems of Knotted Curves},
	url = {https://pubmed.ncbi.nlm.nih.gov/16576674 https://www.ncbi.nlm.nih.gov/pmc/articles/PMC1085274/},
	volume = {9},
	year = {1923}
}

@article{artin_1925,
	author = {Artin, E.},
	doi = {10.1007/bf02950718},
	journal = {Abhandlungen aus dem Mathematischen Seminar der Universität Hamburg},
	pages = {47–72},
	title = {Theorie der {Z}öpfe},
	volume = {4},
	year = {1925}
}

@article{boileau_orevkov_2001,
    AUTHOR = {Boileau, M. and Orevkov, S.},
     TITLE = {Quasi-positivit\'e{} d'une courbe analytique dans une boule
              pseudo-convexe},
   JOURNAL = {C. R. Acad. Sci. Paris S\'er. I Math.},
  FJOURNAL = {Comptes Rendus de l'Acad\'emie des Sciences. S\'erie I.
              Math\'ematique},
    VOLUME = {332},
      YEAR = {2001},
    NUMBER = {9},
     PAGES = {825--830},
      ISSN = {0764-4442},
   MRCLASS = {32Q65 (32T15 57M25)},
  MRNUMBER = {1836094},
MRREVIEWER = {Lee\ Rudolph},
       DOI = {10.1016/S0764-4442(01)01945-0},
       URL = {https://doi.org/10.1016/S0764-4442(01)01945-0},
}

@article{Baader_2017,
	author = {Baader, S. and Dehornoy, P. and Liechti, L.},
	doi = {10.1112/blms.12124},
	issn = {0024-6093},
	journal = {Bulletin of the London Mathematical Society},
	month = {Dec},
	number = {1},
	pages = {166–173},
	publisher = {Wiley},
	title = {Signature and concordance of positive knots},
	url = {http://dx.doi.org/10.1112/blms.12124},
	volume = {50},
	year = {2017}
}

@article{Rudolph_1983_alg_fcts,
    AUTHOR = {Rudolph, L.},
     TITLE = {Algebraic functions and closed braids},
   JOURNAL = {Topology},
  FJOURNAL = {Topology. An International Journal of Mathematics},
    VOLUME = {22},
      YEAR = {1983},
    NUMBER = {2},
     PAGES = {191--202},
      ISSN = {0040-9383},
   MRCLASS = {57M25 (14E15)},
  MRNUMBER = {683760},
MRREVIEWER = {J.\ S.\ Birman},
       DOI = {10.1016/0040-9383(83)90031-9},
       URL = {https://doi.org/10.1016/0040-9383(83)90031-9},
}

@book{milnor_book,
	author = {Milnor, J.},
	mrclass = {57.20 (14.00)},
	mrnumber = {0239612},
	mrreviewer = {J. P. Levine},
	pages = {iii+122},
	publisher = {Princeton University Press, Princeton, N.J.; University of Tokyo Press, Tokyo},
	series = {Ann. Math. Stud., No. 61},
	title = {Singular points of complex hypersurfaces},
	year = {1968},
	doi = {10.1017/S0013091500027097}
}

@misc{knotinfo,
	author = {Livingston, C. and Moore, A.~H.},
	howpublished = {\url{knotinfo.math.indiana.edu}},
	month = {June 12,},
	title = {KnotInfo: Table of Knot Invariants},
	year = {2024}
}

@incollection{birmanbrendle,
	author = {Birman, J.~S. and Brendle, T.~E.},
	booktitle = {Handbook of knot theory},
	doi = {10.1016/B978-044451452-3/50003-4},
	mrclass = {57M25 (20F36 55R80 57M27 57R17)},
	mrnumber = {2179260},
	mrreviewer = {Darren D. Long},
	pages = {19–103},
	publisher = {Elsevier B. V., Amsterdam},
	title = {Braids: a survey},
	url = {https://doi.org/10.1016/B978-044451452-3/50003-4},
	year = {2005}
}

@incollection{bennequin,
	author = {Bennequin, D.},
	booktitle = {Third {S}chnepfenried geometry conference, {V}ol. 1 ({S}chnepfenried, 1982)},
	mrclass = {58F18 (57R15)},
	mrnumber = {753131},
	mrreviewer = {Yakov Eliashberg},
	pages = {87–161},
	publisher = {Soc. Math. France, Paris},
	series = {Astérisque},
	title = {Entrelacements et équations de {P}faff},
	volume = {107},
	year = {1983}
}

@article{rudolphslice,
	author = {Rudolph, L.},
	doi = {10.1090/S0273-0979-1993-00397-5},
	fjournal = {American Mathematical Society. Bulletin. New Series},
	issn = {0273-0979},
	journal = {Bull. Amer. Math. Soc. (N.S.)},
	mrclass = {57M25 (32S55)},
	mrnumber = {1193540},
	mrreviewer = {Charles Livingston},
	number = {1},
	pages = {51–59},
	title = {Quasipositivity as an obstruction to sliceness},
	url = {https://doi.org/10.1090/S0273-0979-1993-00397-5},
	volume = {29},
	year = {1993}
}

@article{kronheimermrowka,
	author = {Kronheimer, P. B. and Mrowka, T. S.},
	doi = {10.1016/0040-9383(93)90051-V},
	fjournal = {Topology. An International Journal of Mathematics},
	issn = {0040-9383},
	journal = {Topology},
	mrclass = {57R57 (57N13 57R40 57R55 58D29)},
	mrnumber = {1241873},
	mrreviewer = {Ronald J. Stern},
	number = {4},
	pages = {773–826},
	title = {Gauge theory for embedded surfaces. {I}},
	url = {https://doi.org/10.1016/0040-9383(93)90051-V},
	volume = {32},
	year = {1993}
}

@article{ng_thurston-bennequin,
	author = {Ng, L.~L.},
	doi = {10.2140/agt.2001.1.427},
	fjournal = {Algebraic \& Geometric Topology},
	issn = {1472-2747},
	journal = {Algebr. Geom. Topol.},
	mrclass = {57M27 (57M15 57M25)},
	mrnumber = {1852765},
	mrreviewer = {Peiyi Zhao},
	pages = {427–434},
	title = {Maximal {T}hurston-{B}ennequin number of two-bridge links},
	url = {https://doi.org/10.2140/agt.2001.1.427},
	volume = {1},
	year = {2001}
}

@incollection{etnyre,
	author = {Etnyre, J.~B.},
	booktitle = {Handbook of knot theory},
	doi = {10.1016/B978-044451452-3/50004-6},
	mrclass = {57R17 (53D35 57M25 57M27)},
	mrnumber = {2179261},
	mrreviewer = {Lenhard L. Ng},
	pages = {105–185},
	publisher = {Elsevier B. V., Amsterdam},
	title = {Legendrian and transversal knots},
	url = {https://doi.org/10.1016/B978-044451452-3/50004-6},
	year = {2005}
}

@article{hedden-whitehead,
	author = {Hedden, M.},
	doi = {10.2140/gt.2007.11.2277},
	fjournal = {Geometry \& Topology},
	issn = {1465-3060},
	journal = {Geom. Topol.},
	mrclass = {57M27 (57R58)},
	mrnumber = {2372849},
	mrreviewer = {Thomas E. Mark},
	pages = {2277–2338},
	title = {Knot {F}loer homology of {W}hitehead doubles},
	url = {https://doi.org/10.2140/gt.2007.11.2277},
	volume = {11},
	year = {2007}
}

@article{kounomotegi,
	author = {Kouno, M. and Motegi, K.},
	doi = {10.1017/S0305004100072054},
	fjournal = {Mathematical Proceedings of the Cambridge Philosophical Society},
	issn = {0305-0041},
	journal = {Math. Proc. Cambridge Philos. Soc.},
	mrclass = {57M25},
	mrnumber = {1277057},
	number = {2},
	pages = {219–228},
	title = {On satellite knots},
	url = {https://doi.org/10.1017/S0305004100072054},
	volume = {115},
	year = {1994}
}

@article{borodzikFeller,
	author = {Borodzik, M. and Feller, P.},
	doi = {10.1016/j.matpur.2019.03.001},
	fjournal = {Journal de Mathématiques Pures et Appliquées. Neuvième Série},
	issn = {0021-7824},
	journal = {J. Math. Pures Appl. (9)},
	mrclass = {57K10},
	mrnumber = {4030255},
	mrreviewer = {Sebastian Baader},
	pages = {273–279},
	title = {Up to topological concordance, links are strongly quasipositive},
	url = {https://doi.org/10.1016/j.matpur.2019.03.001},
	volume = {132},
	year = {2019}
}

@incollection{rudolph_handbook,
	author = {Rudolph, L.},
	booktitle = {Handbook of knot theory},
	doi = {10.1016/B978-044451452-3/50009-5},
	mrclass = {57M25 (32S55 57R17)},
	mrnumber = {2179266},
	mrreviewer = {J. S. Birman},
	pages = {349–427},
	publisher = {Elsevier B. V., Amsterdam},
	title = {Knot theory of complex plane curves},
	url = {https://doi.org/10.1016/B978-044451452-3/50009-5},
	year = {2005}
}

@article{etnyre_honda,
	author = {Etnyre, J.~B. and Honda, K.},
	doi = {10.1016/S0001-8708(02)00027-0},
	fjournal = {Advances in Mathematics},
	issn = {0001-8708},
	journal = {Adv. Math.},
	mrclass = {57M25 (57M50 57R17)},
	mrnumber = {2004728},
	mrreviewer = {Quach thi Câm Vân},
	number = {1},
	pages = {59–74},
	title = {On connected sums and {L}egendrian knots},
	url = {https://doi.org/10.1016/S0001-8708(02)00027-0},
	volume = {179},
	year = {2003}
}

@article{torisu,
	author = {Torisu, I.},
	doi = {10.2140/pjm.2003.210.359},
	fjournal = {Pacific Journal of Mathematics},
	issn = {0030-8730},
	journal = {Pacific J. Math.},
	mrclass = {57M27 (57M25)},
	mrnumber = {1988540},
	mrreviewer = {Mattia Mecchia},
	number = {2},
	pages = {359–365},
	title = {On the additivity of the {T}hurston-{B}ennequin invariant of {L}egendrian knots},
	url = {https://doi.org/10.2140/pjm.2003.210.359},
	volume = {210},
	year = {2003}
}

@article{rudolph_obstrtoslice,
	author = {Rudolph, L.},
	doi = {10.1007/BF01245177},
	fjournal = {Inventiones Mathematicae},
	issn = {0020-9910},
	journal = {Invent. Math.},
	mrclass = {57M25 (53C15 57R57)},
	mrnumber = {1309974},
	mrreviewer = {Ricardo F. Vila Freyer},
	number = {1},
	pages = {155–163},
	title = {An obstruction to sliceness via contact geometry and “classical” gauge theory},
	url = {https://doi.org/10.1007/BF01245177},
	volume = {119},
	year = {1995}
}

@article{Chantraine15,
	author = {Chantraine, B.},
	doi = {10.4171/QT/68},
	fjournal = {Quantum Topology},
	issn = {1663-487X},
	journal = {Quantum Topol.},
	mrclass = {57R17 (53D42 57M27 57M50)},
	mrnumber = {3392961},
	mrreviewer = {Daniel V. Mathews},
	number = {3},
	pages = {451–474},
	title = {Lagrangian concordance is not a symmetric relation},
	url = {https://doi.org/10.4171/QT/68},
	volume = {6},
	year = {2015}
}

@article{rudolph_quasipositive_annuli,
	author = {Rudolph, L.},
	doi = {10.1142/S0218216592000227},
	fjournal = {Journal of Knot Theory and its Ramifications},
	issn = {0218-2165},
	journal = {J. Knot Theory Ramifications},
	mrclass = {57M25},
	mrnumber = {1194997},
	mrreviewer = {J. S. Birman},
	number = {4},
	pages = {451–466},
	title = {Quasipositive annuli. ({C}onstructions of quasipositive knots and links. {IV})},
	url = {https://doi.org/10.1142/S0218216592000227},
	volume = {1},
	year = {1992}
}

@article{nakamura,
	author = {Nakamura, T.},
	fjournal = {Osaka Journal of Mathematics},
	issn = {0030-6126},
	journal = {Osaka J. Math.},
	mrclass = {57M25},
	mrnumber = {1772843},
	mrreviewer = {Kimihiko Motegi},
	number = {2},
	pages = {441–451},
	title = {Four-genus and unknotting number of positive knots and links},
	url = {http://projecteuclid.org/euclid.ojm/1200789208},
	volume = {37},
	year = {2000}
}

@inproceedings{Rudolph_positiveLinksSQP,
	author = {Rudolph, L.},
	booktitle = {Proceedings of the {K}irbyfest ({B}erkeley, {CA}, 1998)},
	doi = {10.2140/gtm.1999.2.555},
	mrclass = {57M25},
	mrnumber = {1734423},
	pages = {555–562},
	publisher = {Geom. Topol. Publ., Coventry},
	series = {Geom. Topol. Monogr.},
	title = {Positive links are strongly quasipositive},
	url = {https://doi.org/10.2140/gtm.1999.2.555},
	volume = {2},
	year = {1999}
}

@book{guillemin-pollack,
	author = {Guillemin, V. and Pollack, A.},
	doi = {10.1090/chel/370},
	isbn = {978-0-8218-5193-7},
	mrclass = {58-01 (57-01)},
	mrnumber = {2680546},
	note = {Reprint of the 1974 original},
	pages = {xviii+224},
	publisher = {AMS Chelsea Publishing, Providence, RI},
	title = {Differential topology},
	url = {https://doi.org/10.1090/chel/370},
	year = {2010}
}

@article{StoimenowConc,
	author = {Stoimenow, A.},
	doi = {10.1142/S0129167X15500500},
	fjournal = {International Journal of Mathematics},
	issn = {0129-167X},
	journal = {Internat. J. Math.},
	mrclass = {57M25 (11C20 15A63 57M27 57N70)},
	mrnumber = {3357039},
	mrreviewer = {Christopher William Davis},
	number = {7},
	pages = {1550050, 36},
	title = {Application of braiding sequences {III}: {C}oncordance of positive knots},
	url = {https://doi.org/10.1142/S0129167X15500500},
	volume = {26},
	year = {2015}
}

@article{Stoimenowpossign,
	author = {Stoimenow, A.},
	doi = {10.1090/S0002-9947-08-04410-3},
	fjournal = {Transactions of the American Mathematical Society},
	issn = {0002-9947},
	journal = {Trans. Amer. Math. Soc.},
	mrclass = {57M25 (57M27 57N70)},
	mrnumber = {2415070},
	mrreviewer = {Jae Choon Cha},
	number = {10},
	pages = {5173–5199},
	title = {Bennequin's inequality and the positivity of the signature},
	url = {https://doi.org/10.1090/S0002-9947-08-04410-3},
	volume = {360},
	year = {2008}
}

@article{Rudolph_surfaces,
	author = {Rudolph, L.},
	doi = {10.1016/0040-9383(92)90017-C},
	fjournal = {Topology. An International Journal of Mathematics},
	issn = {0040-9383},
	journal = {Topology},
	mrclass = {57M25 (20F36)},
	mrnumber = {1167166},
	mrreviewer = {J. S. Birman},
	number = {2},
	pages = {231–237},
	title = {Constructions of quasipositive knots and links. {III}. {A} characterization of quasipositive {S}eifert surfaces},
	url = {https://doi.org/10.1016/0040-9383(92)90017-C},
	volume = {31},
	year = {1992}
}

@incollection{Rudolph_II,
	author = {Rudolph, L.},
	booktitle = {Four-manifold theory ({D}urham, {N}.{H}., 1982)},
	doi = {10.1090/conm/035/780596},
	mrclass = {57M25},
	mrnumber = {780596},
	mrreviewer = {J. S. Birman},
	pages = {485–491},
	publisher = {Amer. Math. Soc., Providence, RI},
	series = {Contemp. Math.},
	title = {Constructions of quasipositive knots and links. {II}},
	url = {https://doi.org/10.1090/conm/035/780596},
	volume = {35},
	year = {1984}
}

@book{brieskornknoerrer,
	author = {Brieskorn, E. and Knörrer, H.},
	doi = {10.1007/978-3-0348-5097-1},
	isbn = {3-7643-1769-8},
	mrclass = {14-01 (14H20 14Hxx)},
	mrnumber = {886476},
	note = {Translated from the German by John Stillwell},
	pages = {vi+721},
	publisher = {Birkhäuser Verlag, Basel},
	title = {Plane algebraic curves},
	url = {https://doi.org/10.1007/978-3-0348-5097-1},
	year = {1986}
}

@article{rudolph_braidedsurfaces,
	author = {Rudolph, L.},
	doi = {10.1007/BF02564622},
	fjournal = {Commentarii Mathematici Helvetici},
	issn = {0010-2571},
	journal = {Comment. Math. Helv.},
	mrclass = {57M25 (32E10)},
	mrnumber = {699004},
	mrreviewer = {G. Peter Scott},
	number = {1},
	pages = {1–37},
	title = {Braided surfaces and {S}eifert ribbons for closed braids},
	url = {https://doi.org/10.1007/BF02564622},
	volume = {58},
	year = {1983}
}

@article{kuessner,
	author = {Kuessner, T},
	mrclass = {Thesis},
	note = {Thesis (Universität Tüubingen)},
	title = {Relative simplicial volume},
	year = {2001}
}

@article{gromovvolume,
     author = {Gromov, M.},
     title = {Volume and bounded cohomology},
     journal = {Publications Mathématiques de l'IHÉS},
     pages = {5-99},
     publisher = {Institut des Hautes \'Etudes Scientifiques},
     volume = {56},
     year = {1982},
     mrnumber = {84h:53053},
     zbl = {0516.53046},
     language = {en}
}

@article{Soma1981,
author = {Soma, T.},
journal = {Inventiones mathematicae},
keywords = {Gromov invariant of closed manifolds; Gromov invariant of the exterior of a link; Gromov invariants of non-hyperbolic links; composite links; splittable links; characteristic submanifold},
pages = {445-454},
title = {The {G}romov {I}nvariant of {L}inks.},
url = {http://eudml.org/doc/142831},
volume = {64},
year = {1981},
}

@book{thurston2022geometry,
  title={The Geometry and Topology of Three-Manifolds: With a Preface by Steven P. Kerckhoff},
  author={Thurston, W.P.},
  isbn={9781470463915},
  lccn={2021037702},
  series={Collected Works},
  url={https://books.google.de/books?id=dKF9EAAAQBAJ},
  year={2022},
  publisher={American Mathematical Society}
}

@article {jacoshalen,
    AUTHOR = {Jaco, W.~H. and Shalen, P.~B.},
     TITLE = {Seifert fibered spaces in {$3$}-manifolds},
   JOURNAL = {Mem. Amer. Math. Soc.},
  FJOURNAL = {Memoirs of the American Mathematical Society},
    VOLUME = {21},
      YEAR = {1979},
    NUMBER = {220},
     PAGES = {viii+192},
      ISSN = {0065-9266,1947-6221},
   MRCLASS = {57N10},
  MRNUMBER = {539411},
MRREVIEWER = {Hugh\ M.\ Hilden},
       DOI = {10.1090/memo/0220},
       URL = {https://doi.org/10.1090/memo/0220},
}

@book {johannson,
    AUTHOR = {Johannson, K.},
     TITLE = {Homotopy equivalences of {$3$}-manifolds with boundaries},
    SERIES = {Lecture Notes in Mathematics},
    VOLUME = {761},
 PUBLISHER = {Springer, Berlin},
      YEAR = {1979},
     PAGES = {ii+303},
      ISBN = {3-540-09714-7},
   MRCLASS = {57N10},
  MRNUMBER = {551744},
MRREVIEWER = {John\ Hempel},
}

@article {birmankolee,
    AUTHOR = {Birman, J. and Ko, K.~H. and Lee, S.~J.},
     TITLE = {A new approach to the word and conjugacy problems in the braid
              groups},
   JOURNAL = {Adv. Math.},
  FJOURNAL = {Advances in Mathematics},
    VOLUME = {139},
      YEAR = {1998},
    NUMBER = {2},
     PAGES = {322--353},
      ISSN = {0001-8708,1090-2082},
   MRCLASS = {20F36 (20F05 20F10)},
  MRNUMBER = {1654165},
MRREVIEWER = {Yuji\ Kobayashi},
       DOI = {10.1006/aima.1998.1761},
       URL = {https://doi.org/10.1006/aima.1998.1761},
}

@article{bucheretal,
author = {Bucher, M. and Burger, M. and Frigerio, R. and Iozzi, A. and Pagliantini, C. and Pozzetti, M. B.},
title = {Isometric embeddings in bounded cohomology},
journal = {Journal of Topology and Analysis},
volume = {06},
number = {01},
pages = {1-25},
year = {2014},
doi = {10.1142/S1793525314500058},

URL = { 
    
        https://doi.org/10.1142/S1793525314500058
    
    

},
eprint = { 
    
        https://doi.org/10.1142/S1793525314500058
    
    

}
}

\end{document}